\newtheorem{theorem}{Theorem}[section]
\newtheorem{proposition}[theorem]{Proposition}
\newtheorem{lemma}[theorem]{Lemma}
\theoremstyle{definition}
\theoremstyle{remark}
\newtheorem{remark}[theorem]{Remark}
\numberwithin{equation}{section}
\newcommand{\al}{\alpha}
\newcommand{\de}{\delta}
\newcommand{\vphi}{\varphi}
\newcommand{\om}{\omega}
\newcommand{\si}{\sigma}
\newcommand{\De}{\Delta}
\newcommand{\La}{\Lambda}
\def\hg{\widehat g}
\newcommand{\tpsi}{\widetilde{\psi}}
\newcommand{\tY}{\widetilde{Y}}
\def\CC{\mathbb{C}}
\def\NN{\mathbb{N}}
\def\RR{\mathbb{R}}
\def\BB{\mathbb{B}}
\def\ZZ{\mathbb{Z}}
\def\TT{\mathbb{T}}
\renewcommand\SS{\mathbb{S}}
\newcommand\minus\backslash
\newcommand\lan\langle
\newcommand\ran\rangle
\newcommand{\e}{{e}}
\DeclareMathOperator\dist{dist}
\renewcommand\leq\leqslant
\renewcommand\geq\geqslant
\newlength{\intwidth}
\newcommand{\Cone}{\mathbb{C}}
\begin{document}

\title[Nodal sets of eigenfunctions of the Dirac operator]{Geometric structures in the nodal sets of eigenfunctions of the Dirac operator}

\author{Francisco Torres de Lizaur}
\address{Instituto de Ciencias Matem\'aticas, Consejo Superior de
  Investigaciones Cient\'\i ficas, 28049 Madrid, Spain}
\email{fj.torres@icmat.es}

\begin{abstract}

We show that, in round spheres of dimension $n\geq3$, for any given collection of codimension 2 smooth submanifolds $\mathfrak{S}:=\{\Sigma_1,...,\Sigma_N\}$ of arbitrarily complicated topology ($N$ being the complex dimension of the spinor bundle), there is always an eigenfunction $\psi=(\psi_1,...,\psi_N)$ of the Dirac operator such that each submanifold $\Sigma_a$, modulo ambient diffeomorphism, is a structurally stable nodal set of the spinor component $\psi_a$. The result holds for any choice of trivialization of the spinor bundle. The emergence of these structures takes place at small scales and sufficiently high energies.

\end{abstract}
\maketitle

\section{Introduction}

Regarding the spectral properties of elliptic operators on a compact manifold, a problem of much physical and mathematical significance is to understand the ultraviolet regime, that is, which patterns emerge as the eigenvalues get larger.

The paradigmatic example of such a pattern is Weyl's law on the growth of the number of eigenvalues of the Laplace operator, which first appeared as a heuristic derivation of the energy distribution of black body radiation (the Rayleigh-Jeans law, at the heart of the ultraviolet catastrophe). A more modern example, to this day the object of intense investigation, is S. T. Yau's conjecture \cite{Yau} on the growth of the Hausdorff measure of the zero sets of eigenfunctions: the total hypersurface measure is expected to be proportional to the square root of the eigenvalue, regardless of the manifold. In the case of real analytic Riemannian metrics, this conjecture was proved by H. Donnelly and C. Fefferman \cite{DF}; in the general case, some very recent breakthroughs have been made by A. Logunov \cite{LO1, LO2} and A. Logunov and E. Malinnikova \cite{LOMA}.

These asymptotic laws suggest a certain universality of the ultraviolet behavior. Indeed, aside from constants, they do not depend on the metric used to define the elliptic operator, nor do they detect the global topology of the underlying manifold. Even if the magnitudes of interest are global, they seem to be controlled only by the small scale behavior. The intuition is that, at sufficiently high energies, the characteristic scales are very small with respect to those relevant to the geometry and topology of the manifold, hence the behavior of eigenfunctions should not be very sensible to these.

Our aim in this paper is to investigate, from the above perspective, the \emph{topology} of the nodal sets of eigenfunctions of \emph{Dirac operators}. 

Dirac operators have gained a central role in geometry, by virtue of their analytic properties (especially the index theorem and the Weitzenb\H{o}ck formula), and how these relate to the geometry and topology of the manifold (see e.g \cite{GRO-LAW, WITT}). What can be learned from their spectral properties has also interested mathematicians and physicists alike \cite{FRI, BAR12, VAFAWITT}. 

On the other hand, the problem of the allowed shapes of zero sets of solutions to elliptic PDEs also has some interesting history. In the case of the Poisson equation, the problem intrigued mathematical physicists already in the nineteenth century, because of their interest in describing the surfaces of constant gravitational or electric potential (for example, to characterize the possible equilibrium shapes of a gravitating fluid). In the case of the Cauchy-Riemann equations, it corresponds to a weakened version of the second Cousin problem: what codimension 2 submanifolds of a complex manifold can be (maybe up to diffeomorphism) the nodal set of a holomorphic function? As first shown by K. Oka \cite{OKA} (in what became both one of the precursors of sheaf theoretical methods in algebraic geometry, and a first hint of M. Gromov's Oka Principle \cite{GRO}), in the case of a Stein manifold, the only obstruction is the obvious one: it must be possible to realize the submanifold in question as the zero set of a continuous, complex-valued function.

In a $n$-dimensional spin manifold $M$, we can formulate an analogous problem for eigenfunctions of the Dirac operator. Recall that eigenfunctions of the Dirac operator are sections of a hermitian vector bundle $S$ of complex rank $r(n)=2^{\lfloor \frac{n}{2}\rfloor}$, called the spinor bundle. In a spin manifold of dimension 3 or higher, the regular zero sets of a spinor are empty (because $2r(n) > n$), so we will focus our attention on the topology of the zero sets of the spinor components. These are complex-valued, so their regular zero sets are, as in the holomorphic case, codimension 2 submanifolds (interestingly, by a result of C. B\H{a}r \cite{BAR2}, critical level sets of a spinor are also of codimension at most 2).

To be more precise, if the spinor bundle is trivial, a choice of trivialization makes any section of $S$ a collection of $r(n)$ complex-valued functions; if the section is an eigenfunction of the Dirac operator, these complex-valued functions are related by a first order partial differential relation, and this relation might impose restrictions on how topologically intricate the zero sets of the functions can get to be.  For example, in $\SS^{3}$, a spinor can be decomposed in two components $(\psi_1, \psi_2)$. If $L_1$ and $L_2$ are two disjoint closed curves, arbitrarily knotted and linked, one might ask whether it is possible to find an eigenfunction of the Dirac operator such that $L_1$ is a nodal set of $\psi_1$, and $L_2$ is a nodal set of $\psi_2$, possibly up to an ambient diffeomorphism (henceforth, by a nodal set of a function we mean a union of connected components of its zero set, not necessarily the whole zero set). 

More generally, given a collection of $r(n)$ codimension 2 submanifolds, $\mathfrak{S}:=\{ \Sigma_a \}_{a=1}^{r(n)}$, we will say that a section of the spinor bundle $\psi$ \emph{realizes} $\mathfrak{S}$ if each submanifold $\Sigma_a$, possibly modulo an ambient diffeomorphism $\Phi$, is a nodal set of the corresponding spinor component $\psi_a$, $\Phi(\Sigma_a)\subset \psi_a^{-1}(0)$.  The main result in this paper is that, in the case of the round $n$-dimensional sphere, any collection can be realized, for any given trivialization, by Dirac eigenfunctions \emph{of high enough energy}:




\begin{theorem}\label{T.main1}
Let $\mathfrak{S}:=\{ \Sigma_a \}_{a=1}^{r(n)}$ be a collection of $r(n)$ closed, pairwise disjoint, smooth codimension 2 submanifolds in $\SS^n$, for $n\geq 3$. Fix an integer $m\geq1$. For any large enough positive integer $k$, and for any choice of orthonormal basis of Killing spinors trivializing the spinor bundle, there is a $C^{m}$-open set of eigenfunctions of the Dirac operator of eigenvalue $\pm(\frac{n}{2}+k)$ realizing $\mathfrak{S}$.
\end{theorem}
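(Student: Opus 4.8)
The plan is to reduce the statement to the construction of a single solution of the \emph{flat} Dirac equation $D_0\Psi=\Psi$ on $\RR^n$ with the right nodal behaviour, and then to transplant it into a genuine high-energy eigenspinor on $\SS^n$ by an inverse-localization argument of the type used for the Laplacian. On $\RR^n$ with its standard spin structure one has $D_0^2=-\De$ componentwise, so for every unit vector $\xi$ and every $\zeta_\xi$ in the $(+1)$-eigenspace of $\iu\slashed{\xi}$ the plane-wave spinor $e^{\iu\xi\cdot x}\zeta_\xi$ solves $D_0\Psi=\Psi$; superposing these, one checks that any fixed component of a flat Dirac solution can be made to approximate in $C^m_{\mathrm{loc}}(\RR^n)$ an arbitrary Herglotz wave (solution of $\De u+u=0$). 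Feeding this into the local realization theorem of Enciso and Peralta-Salas for the Helmholtz equation --- which, given a closed codimension $2$ submanifold $\Sigma\subset\RR^n$, produces a Herglotz wave having $\Sigma$, up to an ambient diffeomorphism, as a structurally stable nodal component --- and using the structural stability of transversal nodal sets (Thom's isotopy theorem), we obtain: for each $a$ there is a flat Dirac solution $\Psi^{(a)}$ on $\RR^n$ whose $a$-th component has a structurally stable nodal component ambiently diffeomorphic to $\Sigma_a$ and contained in a ball $B_a$, where $B_1,\dots,B_{r(n)}$ are pairwise disjoint balls inside some $B(0,R)$.

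Next I would build a single flat solution realizing the whole collection. Solutions of $D_0\Psi=\Psi$ are real-analytic (each component solves the Helmholtz equation), so $D_0-1$ has the Runge approximation property; since $\RR^n\setminus\bigcup_a\overline{B_a}$ is connected, there is an entire solution $\Psi$ of $D_0\Psi=\Psi$ that is $C^m(\overline{B_a})$-close to $\Psi^{(a)}$ for every $a$. By structural stability, for each $a$ the component $\Psi_a$ still has a nodal component ambiently diffeomorphic to $\Sigma_a$ inside $B_a$.

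The heart of the argument is an inverse-localization statement on $\SS^n$: any flat Dirac solution on $\RR^n$ --- in particular $\Psi$ --- is, for all large $k$, approximated in $C^m(B(0,R))$ by the rescaled restriction to a small ball around a fixed point $p\in\SS^n$ of some eigenspinor of eigenvalue $\tfrac{n}{2}+k$. Working in geodesic normal coordinates at $p$ rescaled by $k$, the Killing equation becomes $\nabla\phi=O(1/k)$, so the trivializing frame of Killing spinors converges to a covariantly constant --- hence, in the flat limit, constant --- frame of the spinor bundle of $\RR^n$; this is what makes the statement independent of the chosen Killing basis. Expressing the eigenspinors of eigenvalue $\pm(\tfrac{n}{2}+k)$ through first-order operators, with constant coefficients in the Killing frame, applied to scalar spherical harmonics of degree $k$, and invoking the Mehler--Heine asymptotics of Gegenbauer/Jacobi polynomials (under which rescaled harmonics converge to superpositions of plane waves), one shows that the rescaled eigenspinors, read off in that frame, converge to flat Dirac solutions on $\RR^n$ and that the attainable limits are $C^m_{\mathrm{loc}}$-dense among all of them, which yields the claimed approximation.

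A final use of structural stability shows that each $\psi_a$ has a nodal component inside $B(p,R/k)\subset\SS^n$ ambiently diffeomorphic to $\Sigma_a$; gluing the (disjointly supported) ambient diffeomorphisms produces a single $\Phi$ with $\Phi(\Sigma_a)\subset\psi_a^{-1}(0)$ for all $a$, so $\psi$ realizes $\mathfrak{S}$, and since this survives $C^m$-small perturbations inside the finite-dimensional eigenspace, a whole $C^m$-open set of eigenspinors of eigenvalue $\tfrac{n}{2}+k$ realizes $\mathfrak{S}$; the eigenvalue $-(\tfrac{n}{2}+k)$ is treated identically, taking $\zeta_\xi$ in the $(-1)$-eigenspace of $\iu\slashed{\xi}$. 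The main obstacle will be the inverse localization: describing the Dirac eigenspinors on $\SS^n$ explicitly and uniformly enough in the trivialization, carrying out the stationary-phase / Mehler--Heine analysis of the rescaled eigenspinors and identifying their limits with the flat Dirac solutions, and verifying that, after rescaling, the Killing-spinor trivialization degenerates precisely to the constant trivialization of the spinor bundle of $\RR^n$, so that ``the $a$-th component of $\psi$'' converges to ``the $a$-th component of $\Psi$''. The remaining steps are comparatively soft.
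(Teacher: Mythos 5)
Your overall architecture -- a flat euclidean realization step followed by an inverse localization step that transplants the flat solution into a high-energy eigenspinor on $\SS^n$ -- is exactly the paper's strategy, and your description of the inverse localization (Weitzenb\"ock reduction to scalar spherical harmonics, Mehler--Heine/Darboux asymptotics of Gegenbauer polynomials, the Killing frame degenerating to a constant flat frame at scale $1/k$) captures the right mechanism; this is where the real work lies and the paper carries it out via Propositions~\ref{P.spharm1}--\ref{P.spharm2} and the algebraic trick of Lemma~\ref{trick}, which converts a spinor whose components are spherical harmonics (an eigenfunction of $\slashed{D}^2$ only) into a genuine eigenfunction of $D$.

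There is, however, a genuine gap in your euclidean step. You place the realized copy of each $\Sigma_a$ inside a ball $B_a$, with $B_1,\dots,B_{r(n)}$ \emph{pairwise disjoint}, and glue by Runge. But the theorem requires a \emph{single} ambient diffeomorphism $\Phi$ with $\Phi(\Sigma_a)\subset\psi_a^{-1}(0)$ for every $a$, so the mutual arrangement of the $\Sigma_a$ (linking numbers, higher linking, etc.) must be preserved. If, say, $\Sigma_1,\Sigma_2\subset\SS^3$ are nontrivially linked circles, no diffeomorphism maps them into disjoint balls, so your final $\Phi$ cannot exist. The paper avoids this entirely: the Cauchy data are posed on pairwise disjoint but possibly linked \emph{tubular neighborhoods} $U_a\supset\Sigma_a$ sitting at their original locations, the Lax--Malgrange step produces one global Dirac solution close to each local solution on each $U_a$, and the resulting perturbing diffeomorphisms are $C^m$-close to the identity, supported in the disjoint $U_a$, hence glue. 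You can repair your argument by replacing ``disjoint balls $B_a$'' with ``disjoint tubular neighborhoods of the $\Sigma_a$ in situ'' and keeping the Runge step; the complement of such a union still has no compact components, so Lax--Malgrange applies.

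Two further points worth flagging. First, your route into the euclidean realization -- approximate an Enciso--Peralta-Salas Helmholtz wave in $C^m_{\mathrm{loc}}$ by the $a$-th component of a superposition of plane-wave Dirac solutions $e^{\iu\xi\cdot x}\zeta_\xi$ with $\zeta_\xi\in\ker(\iu\slashed{\xi}-1)$ -- is genuinely different from the paper's, which constructs a local Dirac solution directly via Cauchy--Kovalevskaya with carefully chosen $0$-jet data (Lemma~\ref{stability}) to enforce full rank of $d\phi_a$ on $\Sigma_a$, precisely because one cannot prescribe the $1$-jet for a first-order operator. Your route buys you the freedom to use the second-order EP result as a black box, but the density of $a$-th components of flat Dirac solutions among Herglotz waves is not immediate: for certain directions $\xi$ (e.g.\ $\xi=-e_3$ in dimension $3$ for $a=1$) the $a$-th coordinate projection of $\ker(\iu\slashed{\xi}-1)$ vanishes, so the needed density profile $\zeta(\xi)$ must be truncated away from that exceptional locus and one must check the truncation does not spoil the $C^m$ approximation near $\Sigma_a$; this is true but needs to be said. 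Second, you should note explicitly that the paper's Theorem~\ref{T.euclidean} also proves \emph{structural stability} of the realized nodal sets, which is precisely what lets the final $C^m$-open set of eigenspinors inherit the realization; you invoke it correctly but the euclidean step must actually deliver it, which in the paper requires the positivity condition of Lemma~\ref{stability}.
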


The proof exhibits the interplay between, on one side, the flexibility of such questions in euclidean space (or more generally, for open manifolds); and on the other side, a certain notion of universality at high energies and corresponding small scales that reduces the problem to the euclidean case. 


Flexibility in euclidean space is captured by the following result, whose proof will be given in section \ref{S.euclidean}. If we call $D_0$ the standard Dirac operator on $\RR^{n}$, it reads:

\begin{theorem}\label{T.euclidean}
Fix an integer $m\geq 1$, and an arbitrarily small real number $\epsilon>0$. Inside the unit ball $\BB^{n} \subset \RR^{n}$, consider a collection $\mathfrak{S}:=\{ \Sigma_a \}_{a=1}^{r(n)}$ of $r(n)$ closed, pairwise disjoint, smooth codimension 2 submanifolds.  For any given $\lambda \in \RR$, there is a $\CC^{r(n)}$-valued function $\phi:=(\phi_1,...,\phi_{r(n)})$, satisfying the Dirac equation $D_0 \phi=\lambda \phi$ on $\RR^{n}$, and a diffeomorphism $\Phi_0: \BB^{n} \rightarrow \BB^{n}$ satisfying $||\Phi_0- Id||_{C^{m}}\leq \epsilon$, such that $\Phi_0(\Sigma_a)$ is a nodal set of $\phi_a$. Further, these nodal sets are structurally stable.
\end{theorem}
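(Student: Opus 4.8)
The plan is to prove Theorem~\ref{T.euclidean} by the classical two-step scheme for flexibility results for elliptic equations: first produce a \emph{local} solution of the Dirac equation, defined only near the submanifolds, whose components already exhibit the desired transverse nodal structure; then deform it into a \emph{global} solution on $\RR^n$ by a Runge-type approximation that does not disturb that structure. Concretely, fix $a$. Replacing $\Sigma_a$ by a real-analytic approximation (which costs only a $C^m$-small ambient diffeomorphism, to be absorbed into $\Phi_0$ at the end), we may assume $\Sigma_a$ is real-analytic, and we pick a real-analytic hypersurface $H_a\supset\Sigma_a$ in which $\Sigma_a$ has codimension one, together with Fermi-type coordinates adapted to it. Since $D_0-\lambda$ is elliptic, every hypersurface is noncharacteristic: writing $D_0=\slashed\nu\,\partial_\nu+(\text{tangential part})$ along $H_a$, the matrix $\slashed\nu$ is invertible, so $D_0 w=\lambda w$ is in Cauchy--Kovalevskaya normal form and, from any analytic Cauchy datum $w|_{H_a}$, produces a real-analytic solution $w=w^{(a)}$ of $D_0 w=\lambda w$ on a neighborhood $U_a$ of $\Sigma_a$ in $\RR^n$. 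I would choose the datum so that the $a$-th component $w_a$, restricted to $H_a$, vanishes transversally exactly on $\Sigma_a$, while the remaining components (and hence their tangential derivatives) are arranged so that the normal derivative $\partial_\nu w_a|_{\Sigma_a}$ --- which is \emph{forced} by the equation once $w|_{H_a}$ is fixed --- is $\RR$-linearly independent from the tangential differential of $w_a$. This makes $0$ a regular value of $w_a\colon U_a\to\CC$ near $\Sigma_a$, so $w_a^{-1}(0)\cap U_a=\Sigma_a$ transversally. Taking the $U_a$ pairwise disjoint, let $w$ be the solution of $D_0 w=\lambda w$ on $\bigsqcup_a U_a$ equal to each $w^{(a)}$.

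\textbf{Global approximation.} Fix solid tubular neighborhoods $U_a'\Subset U_a''\Subset U_a$ of $\Sigma_a$ and set $K:=\bigcup_a\overline{U_a''}\subset\BB^n$. Because each $U_a''$ is a thin tube around a submanifold of codimension $2$, the set $\RR^n\setminus K$ is connected (in particular it has no bounded component), so the Lax--Malgrange approximation theorem for the elliptic operator $D_0-\lambda$ (whose coefficients are constant, hence analytic) yields a solution $\phi$ of $D_0\phi=\lambda\phi$ on all of $\RR^n$ with $\|\phi-w\|_{C^0(K)}$ as small as we wish; interior elliptic estimates for $D_0-\lambda$ then upgrade this to $\|\phi-w\|_{C^m(\overline{U_a'})}$ small.

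\textbf{Structural stability and the diffeomorphism.} On $U_a'$ the component $w_a$ has the compact transverse zero set $\Sigma_a\subset\inte U_a'$; by the implicit function theorem together with compactness, any $\phi_a$ that is $C^1$-close to $w_a$ on $\overline{U_a'}$ has $\phi_a^{-1}(0)\cap U_a'$ equal to a submanifold $\Sigma_a'$ that is $C^m$-close to $\Sigma_a$, diffeomorphic to it, and again a transverse zero. Being compact and interior to $U_a'$, $\Sigma_a'$ is a union of connected components of the global set $\phi_a^{-1}(0)$, i.e.\ a nodal set of $\phi_a$. Realizing the small isotopy from $\Sigma_a$ to $\Sigma_a'$ by an ambient diffeomorphism supported in $U_a$ and $C^m$-close to $\id$, and composing these disjointly supported maps (over $a$, and with the analytic-approximation diffeomorphism from the first step) gives $\Phi_0\colon\BB^n\to\BB^n$ with $\|\Phi_0-\id\|_{C^m}\le\epsilon$ and $\Phi_0(\Sigma_a)=\Sigma_a'\subset\phi_a^{-1}(0)$. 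Transversality of the zero of each $\phi_a$ along $\Phi_0(\Sigma_a)$ persists under further $C^m$-small perturbations of $\phi$, which is the claimed structural stability (and yields the $C^m$-open set of eigenfunctions in Theorem~\ref{T.main1}).

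\textbf{Expected main difficulty.} The substantive step is the local model. Unlike for a single scalar elliptic equation, the spinor components are coupled, so one cannot prescribe $\phi_a$ directly; the Cauchy datum for the \emph{whole} spinor must be engineered so that the equation-dictated normal derivative of the $a$-th component makes its zero set transverse in the ambient $\RR^n$ rather than only inside $H_a$, and the choices of $H_a$ and of the vanishing datum must be compatible with the geometry of the normal bundle of $\Sigma_a$ (e.g.\ with a trivialization of it). The approximation and structural-stability steps are, by contrast, the standard Runge plus implicit-function-theorem machinery.
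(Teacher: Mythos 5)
Your strategy --- real-analytic perturbation, a Cauchy--Kovalevskaya local solution posed on a hypersurface $H_a$ containing $\Sigma_a$, Lax--Malgrange globalization, and the implicit-function/Thom stability step --- coincides essentially step for step with the paper's proof in Section~\ref{S.euclidean}, and you correctly identify the crux: the Cauchy datum for the full spinor must be engineered so that the normal derivative of the $a$-th component, which is \emph{determined} by the equation once the tangential data is fixed, makes the zero set of $\phi_a$ transverse in the ambient $\RR^n$.

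However, you leave exactly this engineering undone (``I would choose the datum so that\ldots'' and the closing paragraph labelling it the ``expected main difficulty''), and it is not a routine step; it is the paper's Lemma~\ref{stability}. Writing $\Gamma=\rho(n_a)\rho(\nu_a)$ and $\vphi$ for the spinor of prescribed $\nu_a$-derivatives along $\Sigma_a$, the equation forces $n_a\cdot\nabla\phi_a|_{\Sigma_a}=[\Gamma\vphi]_a$, so one must produce real-analytic $g_b$, $b\ne a$, on all of the compact $\Sigma_a$ with $\Imag[\Gamma\vphi]_a>0$ everywhere. Proving such a choice exists uses that $\Gamma$ is antihermitian (so $\Gamma_{aa}$ is purely imaginary) and invertible (so the entries $\Gamma_{ab}$, $b=1,\dots,r(n)$, cannot all vanish at a point), that $\nu_a\cdot\nabla\Real F_a\ne0$ on $\Sigma_a$ --- which in turn relies on Massey's theorem guaranteeing that the normal bundle of a closed codimension-$2$ submanifold of $\RR^n$ is trivial, so that a complex-valued defining function $F_a$ with full-rank differential exists at all --- and finally a local-to-global patching via a partition of unity followed by analytic approximation. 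None of these ingredients appears in your proposal, so the load-bearing lemma is asserted rather than proved. The remaining steps (disjoint tubes, connectedness of the complement, Lax--Malgrange for $D_0-\lambda$, interior elliptic estimates, and the implicit-function-theorem stability argument) are correctly set up and are indeed the standard part.
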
 

The nodal sets being structurally stable means that any other $\CC^{r(n)}$-valued function $\vphi:=(\vphi_1,...,\vphi_{r(n)})$ that is close enough to $\phi=(\phi_1,...,\phi_{r(n)})$ in the $C^{m}$ topology will have the same collection of nodal sets, modulo a diffeomorphism arbitrarily close to the identity (more precisely, once one choses how close to the identity the diffeomorphism is to be, there exists a bound on how close to $\phi$ a function $\vphi$ needs to be). This stability is a consequence of the fact that, by our construction, the derivatives $d_x \phi_a$ will have full rank for all $x\in \Phi(\Sigma_a)$ (the relation between the full rank of the derivatives and the stability of the structures is often called Thom's isotopy lemma, see e.g \cite{AR}). 

In ~\cite{EP}, A. Enciso and D. Peralta-Salas introduced a general strategy to tackle similar realization questions for level sets of solutions of second order elliptic PDEs in euclidean space. It is based on finding local solutions with prescribed 1-jet on the submanifold one wants to realize (which can be done by means of Cauchy-Kovalevskaya theorem, or solving a well chosen boundary value problem), and then extending these local solutions to global ones by means of a Runge-type approximation theorem. That we have prescribed the first derivative at the submanifold becomes crucial in this last step, to ensure the structural stability of the level set. 


The proof of our euclidean result adapts this strategy to the case of the Dirac operator, but this requires some further considerations. The main difficulty is that now we can only prescribe the 0-jet on the local Cauchy problem, because the Dirac operator is of order one, so structural stability does not come out automatically and a more involved construction is needed.

On compact manifolds, however, the whole strategy above is guaranteed to fail from the start. The reason for this failure is of a fundamental nature: on a compact manifold, one cannot use Runge-type approximations without creating singularities in the global solutions.

In contrast, the idea of the proof of Theorem \ref{T.main1} is to regain some flexibility by exploiting the increasing dimension of the space of eigenfunctions of the Dirac operator. This is analogous to the strategy used in \cite{EPT} in the context of invariant sets of eigenfunctions of the curl operator. Specifically, we prove that for any given eigenfunction $\phi$ of the euclidean Dirac operator of eigenvalue 1, there are many Dirac spinors in the sphere of high enough eigenvalue $\frac{n}{2}+k$ whose behavior (in the sense of $C^{m}$ norm) in a ball of radius $k^{-1}$ is very close to the behavior of $\phi$ in the unit ball.

This implies that any property that eigenfunctions of the euclidean Dirac operator exhibit on compact sets is also exhibited at small scales by high energy eigenfunctions of the Dirac operator on the sphere, provided such property is robust under suitably small perturbations. Theorem \ref{T.euclidean} precisely ensures that the realization of any arbitrarily complicated collection of submanifolds $\mathfrak{S}$ is a property of this kind.

The proof of the theorem yields as well a rather precise understanding of the ambient diffeomorphism $\Phi$ through which the structure $\mathfrak{S}$ is realized. It basically consists on rescaling an open subset containing $\mathfrak{S}$ so that it gets inside a ball of radius proportional to $k^{-1}$. In principle, we have no control on the nodal sets outside the small ball. Nonetheless, the analysis can be refined so that one is able to construct eigenfunctions with prescribed nodal sets inside a given number (as large as we want) of small enough balls.

The paper is organized as follows. In Section~\ref{S.main} we will
prove Theorem \ref{T.main1}, assuming the euclidean realization theorem (Theorem \ref{T.euclidean}) and the key inverse localization result (Theorem~\ref{T.approx1}). The inverse localization result is then proved in Section~\ref{S.approx1}, with the proofs of some additional propositions being postponed to Section ~\ref{S.spharm1}. The euclidean realization theorem is proved in Section~\ref{S.euclidean}. In Section~\ref{S.lens} we adapt the previous results to prescribe the nodal sets at multiple regions at once. We conclude with some comments about the analog of Theorem \ref{T.main1} in the torus case, which can be proved with minor adaptations of the ideas in this paper.

\section{Proof of the main theorem}
\label{S.main}

To set the stage, consider the sphere $\SS^{n}$ ($n\geq3$) endowed with the round metric $g$ of constant sectional curvature 1. We will denote by $S$ the spinor bundle, whose sections $\psi: \SS^{n}\rightarrow S$ we call spinors. The canonical covariant derivative on $S$ associated with the Levi-Civita connection on $T\SS^{n}$ will be denoted by $\nabla^{S}$. The Dirac operator $D$ can be written locally as
\[
D=\sum_{i=1}^{n} \rho(e_{i}) \nabla_{e_{i}}^{S},
\]
where $\rho$ is the Clifford multiplication map, $\rho: T\SS^{n}\rightarrow \text{End}(S)$, and $\{e_{i}\}^n_{i=1}$ is an orthonormal basis of $T\SS^n$.

The spectrum of the Dirac operator on $\SS^{n}$ is well studied (see e.g \cite{BAR1}). In particular, eigenvalues are of the form $\pm (\frac{n}{2}+k)$, for $k\in \NN$, and the linear space of eigenfunctions of fixed $k$ has complex dimension

\[
D(n,k)=r(n)\binom{n+k-1}{k}.
\]
A very explicit characterization of the spinor bundle of the $n$-dimensional sphere can be provided in terms of \emph{Killing spinors}. Killing spinors are solutions of the equation
\[
\nabla^{S} \chi= \lambda \rho(\cdot) \chi, 
\]
for some fixed constant $\lambda$, called the Killing number of the Killing spinor.

Note that a Killing spinor is either identically zero, or it has no zeroes at all. Indeed, the Killing equation implies, on one hand, that a Killing spinor is also an eigenfunction of the Dirac operator, so that it satisfies the strong unique continuation property; and on the other hand, that in a point where a Killing spinor vanishes, all of its derivatives vanish too.


It is well known (see e.g \cite{BAR1}) that for $\lambda=\pm \frac{1}{2}$ one can define a global orthonormal frame $\{ \chi_{a} \}_{a=1}^{r(n)}$ of $S$ consisting on $\lambda$-Killing spinors (the linear space of Killing spinors of Killing number $\pm \frac{1}{2}$ is precisely of complex dimension $D(n,0)=r(n)$). Thus, any section $\psi$ of $S$ can be specified by the choice of $r(n)$ complex functions $\psi_a : \SS^{n}\rightarrow \Cone$,

\[
\psi=\sum_{a=1}^{r(n)} \psi_a \chi_a.
\]

Let us now introduce the notations needed to describe the small scale behavior of spinors. Fix an arbitrary point $p_0\in\SS^{n}$ and take a patch of normal geodesic coordinates
$\Psi:\BB\to B$ centered at $p_0$. We will denote by $B_\rho$ (resp.~$\BB_\rho$) the
ball in~$\RR^n$ (resp.\ the geodesic ball
in~$\SS^{n}$) of 
radius~$\rho$ and centered at the origin (resp.\ at $p_0$). We omit the subscript when $\rho=1$. Via the diffeomorphism $\Psi$ one defines a map on the spinor bundle
\[
\widehat{\Psi}_{*}: S |_{\BB} \longrightarrow B\times\CC^{r(n)};
\]

and a spinor field $\psi$ will be expressed in local coordinates $x$ on $B$ as 
\[
\widehat{\Psi}_{*}\psi(x) =\sum_{a=1}^{r(n)} \widehat{\psi}_{a}(x)\, \xi_a\,,
\]
where $\{\xi_a\}_{a=1}^{r(n)}$ is the standard basis on $\CC^{r(n)}$. Throughout the rest of the paper, unless we explicitly say otherwise, we assume the choice of a basis of Killing spinors $\chi_a$, and we choose the basis $\xi_a$ accordingly, that is, with $\widehat{\Psi}_{*}\chi_a(0) =\xi_a$. 

The small scale behavior of the spinor $\psi$ on a ball of radius $k^{-1}$ around $p_0$  is thus captured by the rescaled field
$$
\widehat{\Psi}_{*}\psi \Big(\frac{\cdot}{k}\Big):=\sum_{a=1}^{r(n)}
\widehat{\psi}_{a}\Big(\frac{\cdot}{k}\Big)\, \xi_a\,.
$$

Note that, in general, the expression of the $a$-th spinor component $\psi_a$ in normal coordinates, $\psi_{a}\circ\Psi^{-1}$, does correspond exactly to the euclidean component $\widehat{\psi}_{a}$. Still, one has
\begin{equation}\label{eq.loc}
\psi_a \circ \Psi^{-1}\Big(\frac{\cdot} {k}\Big)=\widehat{\psi}_{a}\Big(\frac{\cdot}{k}\Big)+\sum_b A_{ab}\Big(\frac{\cdot}{k}\Big)\widehat{\psi}_{b}\Big(\frac{\cdot}{k}\Big)
\end{equation}
with smooth functions $A_{ab}$ verifying 
\[
\bigg\|A_{ab}\Big(\frac{\cdot}{k}\Big)\bigg\|_{C^m(B)}\leq\frac{C_m}{k},
\]
with constants $C_m$ not depending on $k$.
Finally, we will denote the standard Dirac operator on $\RR^{n}$ by $D_0$.

We are now ready to state the following approximation theorem, which is the main ingredient in the proof. It makes precise the statement in the introduction to the effect that the increasing degeneracy of the spectrum of the Dirac operator introduces the exact amount of flexibility we need in our problem. For the sake of concreteness, we will concentrate henceforth on the case of positive eigenvalues, the negative case being completely analogous:

\begin{theorem}\label{T.approx1}
Let $\phi:=(\phi_1,...,\phi_{r(n)})$ be a $\CC^{r(n)}$ valued function in $\RR^n$, satisfying the Dirac equation $D_0 \phi=\phi$. Fix an integer $m\geq1$ and a positive constant $\delta$. For any large enough positive integer $k$, there is an eigenfunction $\psi$ of the Dirac operator $D$ on $\SS^{n}$ of eigenvalue $(\frac{n}{2}+k)$ such that
\begin{equation*}
\bigg\|\phi-\widehat{\Psi}_{*}\psi \Big(\frac\cdot k\Big)\bigg\|_{C^{m}(B)}<\delta\,.
\end{equation*}
\end{theorem}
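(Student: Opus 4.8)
\textbf{Proof strategy for Theorem \ref{T.approx1}.}

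The plan is to build the eigenfunction $\psi$ out of spherical harmonics of the appropriate degree, taking advantage of the fact that the dimension $D(n,k)$ of the eigenspace grows polynomially in $k$ while the number of conditions we need to impose stays fixed. The starting point is a Fourier-type representation of the euclidean solution $\phi$ of $D_0\phi=\phi$. Since $\phi$ satisfies a first-order constant-coefficient elliptic system, each component $\phi_a$ is in particular an entire solution of the Helmholtz equation $\Delta\phi_a+\phi_a=0$; by the Herglotz--type representation (or by expanding in spherical Bessel functions), on any fixed compact set such as $\BB^n$ one can approximate $\phi$ in $C^m(B)$, to any desired precision $\delta/2$, by a finite superposition of plane-wave spinors $\sum_j c_j\, v_j\, e^{\iu\,\bxi_j\cdot x}$ with $|\bxi_j|=1$ and the constant spinor amplitudes $v_j\in\CC^{r(n)}$ constrained by the algebraic Dirac relation $\rho(\bxi_j)v_j=-\iu\,v_j$ (or the sign convention dictated by $D_0\phi=\phi$). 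So it suffices to reproduce, at scale $k^{-1}$ near $p_0$, each individual plane-wave spinor.

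The heart of the argument is then the \emph{inverse localization} for a single plane wave: for each unit vector $\bxi$ and admissible amplitude $v$, I would produce an eigenspinor $\psi^{(\bxi)}$ of $D$ on $\SS^n$ with eigenvalue $\tfrac n2+k$ such that $\widehat\Psi_*\psi^{(\bxi)}(\cdot/k)\to v\,e^{\iu\,\bxi\cdot x}$ in $C^m(B)$ as $k\to\infty$. The natural candidate is obtained by pulling the plane wave back under stereographic projection or, more intrinsically, by using the explicit description of Dirac eigenspinors on the round sphere in terms of Killing spinors multiplied by eigenfunctions of the scalar Laplacian: concretely, to a highest-weight scalar spherical harmonic $Y_k$ (which, near $p_0$ and after rescaling by $k^{-1}$, converges to a plane wave $e^{\iu\,\bxi\cdot x}$ — this is the classical scalar inverse-localization statement used in \cite{EPT}) one associates a Dirac eigenspinor by a first-order differential operator intertwining the scalar Laplacian eigenspace with the Dirac eigenspace. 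Applying this construction and tracking, via \eqref{eq.loc}, that the frame-change matrices $A_{ab}(\cdot/k)$ are $O(1/k)$ in $C^m$, one checks that the rescaled components $\widehat\psi_a(\cdot/k)$ converge to the components of $v\,e^{\iu\,\bxi\cdot x}$ with the amplitude $v$ fixed precisely by the normalization $\widehat\Psi_*\chi_a(0)=\xi_a$ of the Killing-spinor frame. Taking the corresponding finite linear combination $\psi=\sum_j c_j\,\psi^{(\bxi_j)}$ — which is again a Dirac eigenspinor of eigenvalue $\tfrac n2+k$ since all summands lie in the same eigenspace — and choosing $k$ large enough that the approximation error in each plane wave is below $\delta/(2\sum_j|c_j|)$, yields the claimed bound $\|\phi-\widehat\Psi_*\psi(\cdot/k)\|_{C^m(B)}<\delta$.

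\textbf{Main obstacle.} The routine part is the scalar Fourier/Herglotz approximation and the bookkeeping of the $O(1/k)$ connection terms. The delicate point is the \emph{uniform} $C^m$ control of the inverse-localization limit for the Dirac eigenspinors: one must show that the chosen eigenspinor, expressed in the normal-coordinate trivialization and rescaled, converges in $C^m$ on the whole ball $B$ (not merely pointwise or in $C^0$), with explicit quantitative decay of the error in $k$. This requires either (a) precise asymptotics of the relevant special functions (Gegenbauer/Jacobi polynomials, or spherical Bessel functions) uniformly on compact sets together with their derivatives up to order $m$, or (b) a clean operator-theoretic argument realizing the sphere eigenspinor as $(D-\tfrac n2)$ — or a suitable Killing-twisted first-order operator — applied to a scalar highest-weight harmonic, and then invoking the scalar result of the type proved in \cite{EPT}, being careful that differentiating a $C^{m+1}$-convergent family of scalar functions and multiplying by the (smooth, $k$-independent) Killing frame preserves $C^m$ convergence after the $k^{-1}$ rescaling. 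I would pursue route (b), isolating the scalar statement as a separate proposition (to be proved in Section \ref{S.spharm1}) and keeping the Dirac-specific part to the intertwining identity and the frame normalization.
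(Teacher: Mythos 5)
Your overall architecture is right and matches the paper's: isolate a scalar inverse–localization proposition for spherical harmonics, then pass from scalar harmonics to Dirac eigenspinors by an operator-theoretic step. What you label ``route (b)'' is indeed the route taken. However, there is a genuine gap in the operator step, and the plane-wave decomposition is an unnecessary detour.

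First, the detour: since $D_0^2=-\Delta$, each component $\phi_a$ already solves $\Delta\phi_a+\phi_a=0$, so you can feed $\phi_a$ directly into the scalar proposition to get spherical harmonics $Y_a$ of eigenvalue $k(n+k-1)$, and set $\tpsi:=\sum_a Y_a\chi_a$. No plane-wave/Herglotz decomposition of $\phi$ with constrained spinor amplitudes $v_j$ is needed at the level of Theorem~\ref{T.approx1}; that representation appears, if anywhere, inside the proof of the scalar proposition (Proposition~\ref{P.spharm1}), not outside.

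Second, and more importantly, the gap: the spinor $\tpsi$ you build from scalar harmonics is \emph{not} a Dirac eigenspinor, and the operator you suggest ($D-\tfrac n2$, or a generic ``intertwining first-order operator'') does not turn it into one; you also give no argument that the operator preserves the rescaled $C^m$ approximation. The missing ingredient is the Weitzenb\"ock identity for the twisted connection $\nabla^\chi=\nabla^S+\tfrac12\rho(\cdot)$, which gives $\slashed{D}^2=\Delta_\chi+\tfrac{(n-1)^2}4$ with $\slashed{D}=D-\tfrac12$. Since the components of a spinor in a $-\tfrac12$-Killing frame transform $\Delta_\chi$ into $\Delta_{\SS^n}$ componentwise, this is precisely what identifies $\tpsi$ as an eigenspinor of $\slashed{D}^2$ of eigenvalue $\mu^2$, where $\mu=\tfrac{n-1}2+k$. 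One then projects onto the $\mu$-eigenspace of $\slashed{D}$ by the elementary identity: if $\slashed{D}^2\tpsi=\mu^2\tpsi$, then $\psi:=\tfrac{1}{2\mu^2}\slashed{D}\bigl(\slashed{D}\tpsi+\mu\tpsi\bigr)$ satisfies $\slashed{D}\psi=\mu\psi$, hence $D\psi=(\tfrac n2+k)\psi$. Finally, the $C^m$ control of $\psi$ against $\phi$ is not automatic: it uses the Euclidean identity $\phi=\tfrac12 D_0(D_0+1)\phi$ (a consequence of $D_0\phi=\phi$), so that the dominant part of $\widehat\Psi_*\psi(\cdot/k)$ is $\tfrac12(D_0^2+D_0)$ applied to $\widehat\Psi_*\tpsi(\cdot/k)$, and one needs the scalar proposition to give a $C^{m+2}$ (not just $C^m$) bound, since two derivatives are lost in this step, plus bookkeeping of the $O(1/k)$ coefficient mismatches. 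These are exactly the points your sketch leaves unaddressed.
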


Note that it is the converse statement which is trivially true, not only in spheres but in any spin manifold: an eigenfunction of the Dirac operator of high enough eigenvalue $\lambda$ always behaves, at scales of order $\lambda^{-1}$, as an eigenfunction of the euclidean Dirac operator of eigenvalue 1; however, this is not very revealing, since in principle we have no information whatsoever as to which particular eigenfunction we are converging to, and its properties. 

By contrast, the above theorem ensures that any property that can be exhibited by solutions to the euclidean Dirac equation $D_0 \phi=\phi$ on Euclidean space is also exhibited, at small scales, by eigenfunctions of $D$ of high enough eigenvalue, provided such property is robust under $C^{m}$ perturbations.

Now, let $\Phi'$ be a diffeomorphism of $\SS^{n}$ mapping
the collection $\{ \Sigma_a \}$ into the ball $\BB_{1/2}$, and the ball $\BB_{1/2}$
into itself. Consider the collection of codimension two submanifolds $\{ \Sigma'_{a}\}$
in $B_{1/2}$, defined as
$$
\Sigma'_a:=(\Psi\circ \Phi')(\Sigma_a)\,.
$$
The euclidean realization Theorem \ref{T.euclidean} yields a $\CC^{r(n)}$-valued function $\phi=(\phi_1,...,\phi_r(n))$ and a diffeomorphism $\Phi_0$, very close to simply being the identity, such that $\Phi_0(\Sigma'_a) \subset \phi_{a}^{-1}(0)$. 

Besides, Theorem \ref{T.approx1} allows us to find, for any large enough integer $k$, a spinor $\psi$ verifying $D\psi=(\frac{n}{2}+k)\psi$ and approximating $\phi$ in the $C^{m}(B)$ norm as much as we want. In particular, we can choose $k$ so that each component $\psi_a$ of the spinor approximates $\phi_a$ as much as we want. In view of equation \ref{eq.loc}, we have
\[
\bigg\|\phi_a-\psi_a \circ \Psi^{-1}\Big(\frac{\cdot} {k}\Big)\bigg\|_{C^{m}(B)}\leq \bigg\|\phi_a-\widehat{\psi}_{a}\Big(\frac{\cdot} {k}\Big)\bigg\|_{C^{m}(B)}+\frac{C}{k}\bigg\|\phi-\widehat{\Psi}_{*} \psi\Big(\frac{\cdot} {k}\Big)\bigg\|_{C^{m}(B)}+\frac{C}{k}||\phi||_{C^{m}(B)},
\]

so, choosing $k$ big enough, we get 
$$
\bigg\|\phi_a-\psi_a \circ \Psi^{-1}\Big(\frac{\cdot} {k}\Big)\bigg\|_{C^{m}(B)}<\delta\,
$$
for $\delta>0$ as small as we want.

The structural stability property ensures then the existence of a diffeomorphism $\Phi_1: \RR^{n} \rightarrow \RR^{n}$ very close to the identity, such that $\Phi_1(\Phi_0(\Sigma'_a)) \subset \psi_a\circ \Psi^{-1}(\frac{\cdot} {k})$. Therefore, each $\psi_a$ has the corresponding submanifold $\Psi^{-1}(\Phi_1(\Phi_0(\Sigma'_a))$ as a nodal set, and we conclude that $\psi$ realizes $\mathfrak{S}$. Every other eigenfunction close enough to $\psi$ will also realize $\mathfrak{S}$, through a slightly perturbed diffeomorphism.

\section{Inverse localization: proof of Theorem~\ref{T.approx1}}
\label{S.approx1}

On any spin manifold, the Weitzenb\H{o}ck identity relates two natural second order elliptic operators arising from the spinor connection: the square of the Dirac operator $D^{2}$, and the covariant laplacian $\Delta_S:=\nabla^{S*} \nabla^{S}$ (here $\nabla^{S*}$ is the $L^2$ adjoint of the covariant derivative on the spinor bundle). It reads

\begin{equation}\label{wetz1}
D^{2}=\Delta_S+ \frac{s}{4}\, , 
\end{equation}

where $s$ is the scalar curvature. 

In the $n$-sphere, the Weitzenb\H{o}ck formula provides a link between Dirac spinors and spherical harmonics. To see this, first consider the twisted connection
\[
\nabla^{\chi}:=\nabla^{S}+\frac{1}{2}\rho(\cdot)
\]
whose covariantly constant sections are precisely the Killing spinors of Killing number $-\frac{1}{2}$. The corresponding laplacian $\Delta_{\chi}:=\nabla^{\chi*} \nabla^{\chi}$ has a transparent interpretation when written in an orthonormal frame $\{\chi_a\}_{a=1}^{r(n)}$ of $-\frac{1}{2}$-Killing spinors: a straightforward computation yields

\begin{equation}\label{diracharm}
\Delta_{\chi} \psi=\sum_{a=1}^{r(n)} (\Delta_{\SS^{n}} \psi_a) \chi_a\,,
\end{equation}

where $\Delta_{\SS^{n}}$ is the Laplace-Beltrami operator acting on functions.

On the other hand, consider the twisted Dirac operator
 $$
 \slashed{D}:=D-\frac{1}{2}.
 $$ 
From the Weitzenb\H{o}ck formula we obtain that the couple $\slashed{D}^{2}$ and $\Delta_{\chi}$ satisfies the identity
\begin{equation}\label{wetz2}
\slashed{D}^{2}=\Delta_\chi+ \frac{(n-1)^2}{4}\,,
\end{equation}
where we have used that the scalar curvature of the round $n$-sphere is $n(n-1)$.
Now, let $\psi$ be an eigenfuntion of $D$ of eigenvalue $\frac{n}{2}+k$. It verifies
\begin{equation}\label{wetz3}
\Big(D-\frac{1}{2}\Big)^{2}\psi=\Big(\frac{n-1}{2}+k\Big)^2 \psi = k(n+k-1) \psi + \frac{(n-1)^2}{4} \psi, 
\end{equation}
hence equation \ref{wetz2} allows us to conclude that 
\[
 \Delta_\chi \psi= k(n+k-1) \psi
\]
and so, in view of equation \ref{diracharm}, the components $\psi_a$ of $\psi$ are complex spherical harmonics of eigenvalue $k(n+k-1)$. 

The following proposition captures the small scale behavior of spherical harmonics:

\begin{proposition}\label{P.spharm1}
Let $\phi$ be a complex-valued function in $\RR^n$, satisfying
$\Delta \phi+\phi=0$. Fix a positive integer $m$ and positive constant $\de'$. For any large enough integer $k$, there is a complex-valued spherical harmonic $Y$ on
$\SS^n$ with energy $k(n+k-1)$ such that
\begin{equation*}
\bigg\|\phi-Y\circ \Psi^{-1}\Big(\frac\cdot k\Big)\bigg\|_{C^{m+2}(B)}\leq\de'\,.
\end{equation*}
\end{proposition}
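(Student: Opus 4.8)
The plan is to reduce Proposition~\ref{P.spharm1} to a quantitative Mehler--Heine-type limit identifying the small-scale behaviour of the latitudinal part of a spherical harmonic on $\SS^n$ with a Bessel function.

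\emph{Step 1 (Fourier--Bessel expansion of $\phi$).} By elliptic regularity $\phi$ is smooth; restricting it to the spheres $\{|x|=r\}$ and expanding in a complex orthonormal basis $\{Y_{l,j}\}$ of spherical harmonics on $\SS^{n-1}$ (normalised so that $\Delta_{\SS^{n-1}}Y_{l,j}=-l(l+n-2)Y_{l,j}$) yields
\[
\phi(x)=\sum_{l,j}c_{l,j}\,j_l(|x|)\,Y_{l,j}\!\left(\frac{x}{|x|}\right),
\]
where $j_l$ is the solution, regular at the origin, of the radial Helmholtz ODE in $\RR^n$ with angular degree $l$ (so $j_l(r)\sim r^l$), the coefficient functions being forced to be multiples of $j_l$ because $\phi$ solves $\Delta\phi+\phi=0$ and is regular at $0$. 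Standard decay estimates for the Fourier--Bessel coefficients of a smooth function give convergence in $C^{\infty}\loc(\RR^n)$, so, given $\de'$, we may fix $L$ with
\[
\|\phi-\phi_L\|_{C^{m+2}(B)}<\tfrac{\de'}{2},\qquad \phi_L:=\sum_{l\leq L,\,j}c_{l,j}\,j_l(|x|)\,Y_{l,j}\!\left(\frac{x}{|x|}\right).
\]

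\emph{Step 2 (model harmonics on $\SS^n$ and their rescaled limit).} Writing the round metric near $p_0$ in geodesic polar coordinates as $dr^2+\sin^2 r\,g_{\SS^{n-1}}$ and separating variables, for every $l$, every $j$, and every integer $k\geq l$ there is a spherical harmonic on $\SS^n$ of energy $k(n+k-1)$ of the form $Y_{k,l,j}=f_{k,l}(r)\,Y_{l,j}(\theta)$, where $f_{k,l}$ is, up to normalisation, a Gegenbauer polynomial in $\cos r$ times $(\sin r)^l$; it is regular at both poles and $f_{k,l}(r)\sim r^l$ at $r=0$. The crucial analytic input, whose proof is deferred to Section~\ref{S.spharm1}, is that under the substitution $r=s/k$ the ODE satisfied by $f_{k,l}$ converges coefficientwise to the radial Helmholtz ODE with angular degree $l$, so there exist normalising constants $\ga_{k,l}$ with
\[
\ga_{k,l}\,f_{k,l}\!\left(\frac{s}{k}\right)\longrightarrow j_l(s)\quad\text{in }C^{m+2}([0,1])\text{ as }k\to\infty,\ \text{uniformly over }l\leq L.
\]
(Both members are genuinely smooth functions of the Cartesian variable $x=s\,\theta$; the coordinate singularity of polar coordinates at $r=0$ is immaterial for a fixed angular degree, being cancelled by the $(\sin r)^l$ factor, and the convergence is to be understood in that Cartesian sense.)

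\emph{Step 3 (assembling $Y$).} Put $Y:=\sum_{l\leq L,\,j}c_{l,j}\,\ga_{k,l}\,Y_{k,l,j}$, still a spherical harmonic on $\SS^n$ of energy $k(n+k-1)$. In the normal coordinates $\Psi$ centred at $p_0$ the point $\Psi^{-1}(x)$ lies at geodesic distance $|x|$ in the direction $x/|x|$, whence
\[
Y\circ\Psi^{-1}\!\left(\frac{x}{k}\right)=\sum_{l\leq L,\,j}c_{l,j}\,\ga_{k,l}\,f_{k,l}\!\left(\frac{|x|}{k}\right)Y_{l,j}\!\left(\frac{x}{|x|}\right)\xrightarrow[\ k\to\infty\ ]{C^{m+2}(B)}\phi_L
\]
by Step 2. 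Taking $k$ large enough that this difference is $<\de'/2$ and adding the bound of Step 1 gives $\|\phi-Y\circ\Psi^{-1}(\cdot/k)\|_{C^{m+2}(B)}<\de'$. The only genuine difficulty is the quantitative Gegenbauer-to-Bessel limit of Step 2: one must estimate the normalised polynomials $f_{k,l}$ together with all their derivatives up to order $m+2$ on the shrinking interval $r\in[0,1/k]$, uniformly in $l\leq L$, and upgrade the radial convergence to convergence in the Cartesian $C^{m+2}(B)$ norm. This is the content of Section~\ref{S.spharm1}.
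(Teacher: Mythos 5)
Your proposal is correct in outline but follows a genuinely different route from the paper, and the difference is worth spelling out.

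The paper does not match the angular modes of $\phi$ one by one. Instead it first approximates $\phi$ (in $L^2$, then upgraded to $C^{m+2}$ by elliptic estimates) by a \emph{finite sum of translates of a single zonal Helmholtz solution}, $\vphi(x)=\sum_j c_j\,|x-x_j|^{1-n/2}J_{n/2-1}(|x-x_j|)$; this is done via Herglotz's theorem (writing the truncated Fourier--Bessel sum as $\int_{\SS^{n-1}}f_1(\xi)e^{ix\cdot\xi}d\sigma$) followed by a discretization of the density. The corresponding spherical harmonic is then a linear combination of \emph{zonal} harmonics $C^n_k(p\cdot p_j)$ centered at $p_j=\Psi^{-1}(x_j/k)$, and the only asymptotic input needed is the Darboux/Mehler--Heine formula for the single function $t\mapsto P_k^{(n/2-1,n/2-1)}(\cos(t/k))$, i.e.\ the $l=0$ case. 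You, by contrast, separate variables and match each angular degree $l\leq L$ individually, which requires a Mehler--Heine-type limit for the latitudinal functions $f_{k,l}$ with the additional $(\sin r)^l$ factor, uniformly in $l\leq L$ and up to $m+2$ radial derivatives. Both are valid, but the paper's translation-based representation has two concrete advantages: it only needs the zonal asymptotics, and the resulting sum-of-zonal-harmonics form is reused verbatim in Section~\ref{S.lens}, where the decay of $C^n_k(p\cdot q)$ away from $q$ and $-q$ is what allows one to superpose structures at several points at once.

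There is one step in your sketch that should be tightened. You aim for $C^{m+2}([0,1])$ convergence of $\gamma_{k,l}f_{k,l}(s/k)\to j_l(s)$ in the radial variable and then assert Cartesian $C^{m+2}(B)$ convergence, acknowledging but not resolving the polar-coordinate singularity at the origin. The cleaner path --- and the one the paper takes --- is to establish only a $C^0(B)$ bound for $\|\vphi-\tY\|$ directly from the Mehler--Heine asymptotics, and then upgrade to $C^{m+2}$ by elliptic estimates, using that $\vphi$ solves $\Delta\vphi+\vphi=0$ exactly while the rescaled $\tY$ solves $\Delta\tY+\tY=\tfrac1k A\tY$ with an operator $A$ of bounded coefficients. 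The same device would repair your argument: do not chase high-order radial derivatives of Gegenbauer polynomials through the coordinate singularity; get $C^0$, then let Schauder do the rest. Similarly, in your Step~1 you invoke $C^\infty\loc$ convergence of the Fourier--Bessel series, which again is most economically obtained by truncating in $L^2(B_2)$ and applying interior elliptic estimates, as the paper does.
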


The proof, which is a direct generalization to any dimension of the procedure in \cite{EPT}, is given in Section \ref{S.spharm1}. Here and in what follows, by $\Delta$ we denote the euclidean negative Laplacian, $\Delta=\sum \partial^{2}_{\mu}$.


Now, let $\phi:=(\phi_1,...,\phi_r(n))$ be a $\CC^{r(n)}$-valued function on $\RR^{n}$ satisfying $D_0 \phi=\phi$. Since $D_{0}^2=-\Delta$, the complex functions $\phi_a$ satisfy the Helmholtz equation $\Delta \phi_a + \phi_a=0$. From Proposition \ref{P.spharm1}, we obtain a collection of spherical harmonics $Y_a$, with $a=1,...,r(n)$, locally approximating the corresponding functions $\phi_a$. The spinor $\tpsi$ defined as
\[
\tpsi_a:=Y_a
\]
is an eigenfunction of the operator $\slashed{D}^2$ of eigenvalue $k(n+k-1)+\frac{(n-1)^2}{4}$ (by equation \ref{wetz2}). It satisfies
\[
\bigg\|\phi-\widehat{\Psi}_{*}\tpsi \Big(\frac\cdot k\Big)\bigg\|_{C^{m+2}(B)}\leq\delta'+Ck^{-1}||\phi||_{C^{m+2}(B)},
\]
where the last term on the right hand side of the inequality comes from the mismatch between euclidean and spherical trivializations (as in equation \ref{eq.loc}).

Since $\phi$ is fixed, by choosing $\delta'$ in Proposition \ref{P.spharm1} small enough and $k$ large enough, we get that $\tpsi$ satisfies the bound 
\[
\bigg\|\phi-\widehat{\Psi}_{*}\tpsi \Big(\frac\cdot k\Big)\bigg\|_{C^{m+2}(B)}<\delta.
\] 
for a given $\delta$ as small as we want. 

Given such a $\tpsi$, the following lemma provides us with an eigenfunction of $D$ that still approximates the euclidean spinor $\phi$:

\begin{lemma} \label{trick}
Let $\phi$ and $\tpsi$ be as above. The spinor $\psi$ defined as
\begin{equation*}
\psi:=\frac{\slashed{D}(\slashed{D}\tpsi+(\frac{n-1}{2}+k)\tpsi)}{2(\frac{n-1}{2}+k)^2}
\end{equation*}
is an eigenfunction of the Dirac operator $D$ of eigenvalue $\frac{n}{2}+k$, satisfying
\[
\bigg\|\phi-\widehat{\Psi}_{*}\psi \Big(\frac\cdot k\Big)\bigg\|_{C^{m}(B)}<C\delta+C k^{-1} ||\phi||_{C^{m+2}(B)}.
\]
with constants $C$ not depending on $k$.
\end{lemma}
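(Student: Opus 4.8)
The plan is to verify directly that the stated $\psi$ is an eigenfunction of $D$ with eigenvalue $\tfrac n2+k$, and then to track how the two algebraic operations — applying $\slashed D$ twice and dividing by the constant — propagate the $C^{m+2}$-closeness of $\tpsi$ to $\phi$ down to $C^m$-closeness of $\widehat\Psi_*\psi(\cdot/k)$ to $\phi$. First I would record the relevant spectral facts. We know $\tpsi$ satisfies $\slashed D^2\tpsi = \big(\tfrac{n-1}{2}+k\big)^2\tpsi$, i.e.\ $(\slashed D-\mu)(\slashed D+\mu)\tpsi=0$ with $\mu:=\tfrac{n-1}{2}+k$. Hence $\slashed D\tpsi$ also lies in the $\mu^2$-eigenspace of $\slashed D^2$, and the operator $\slashed D$ restricted to that eigenspace has square $\mu^2$, so it is diagonalizable with eigenvalues $\pm\mu$. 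The vector $\slashed D\tpsi+\mu\tpsi$ is, up to the factor, the spectral projection of $\tpsi$ onto the $+\mu$-eigenspace of $\slashed D$: indeed $(\slashed D-\mu)(\slashed D\tpsi+\mu\tpsi) = (\slashed D^2-\mu^2)\tpsi=0$. Therefore
\[
\slashed D\big(\slashed D\tpsi+\mu\tpsi\big)=\mu\big(\slashed D\tpsi+\mu\tpsi\big),
\]
and $\psi = \dfrac{\slashed D(\slashed D\tpsi+\mu\tpsi)}{2\mu^2} = \dfrac{\slashed D\tpsi+\mu\tpsi}{2\mu}$ satisfies $\slashed D\psi=\mu\psi$, i.e.\ $D\psi = \big(\mu+\tfrac12\big)\psi = \big(\tfrac n2+k\big)\psi$. (One should also check $\psi\neq0$ unless $\tpsi$ is itself the $-\mu$ eigenvector, but for the approximation estimate this is automatic: if $\psi$ were zero it could not be close to the nonzero $\phi$ at the relevant scale — or, more cleanly, one notes $\psi$ is simply the $+\mu$-part of $\tpsi$, and the estimate below shows it is close to $\phi$, hence nonzero for $k$ large.)

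Next comes the estimate. The key point is that $\slashed D = D - \tfrac12$ and, in the rescaled coordinates $x\mapsto x/k$, the operator $D$ expressed via $\widehat\Psi_*$ acts as $k\,(D_0 + \text{lower order})$ — more precisely, if $F$ is a spinor and we write $\widehat\Psi_*F(\cdot/k)$, then $\widehat\Psi_*(DF)(\cdot/k) = k\big(D_0 + R_k\big)\widehat\Psi_*F(\cdot/k)$ where $R_k$ is a first-order operator with coefficients bounded by $C_m k^{-1}$ in $C^m(B)$, coming exactly from the metric/trivialization mismatch quantified in \eqref{eq.loc} and the fact that normal coordinates give $g=\mathrm{Id}+O(|x|^2)$. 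Applying this twice, $\widehat\Psi_*(\slashed D^2 F)(\cdot/k) = k^2(D_0^2 + k^{-1}\cdot(\text{bdd}))\widehat\Psi_*F(\cdot/k) = k^2(-\Delta + O(k^{-1}))\widehat\Psi_*F(\cdot/k)$. Now I would write $\psi = \tfrac{1}{2\mu}\slashed D\tpsi + \tfrac12\tpsi$ and compute $\widehat\Psi_*\psi(\cdot/k)$: the term $\tfrac12\widehat\Psi_*\tpsi(\cdot/k)$ is within $\tfrac12\delta + Ck^{-1}\|\phi\|_{C^{m+2}}$ of $\tfrac12\phi$ in $C^{m+2}\supset C^m$, by hypothesis on $\tpsi$; and $\tfrac{1}{2\mu}\widehat\Psi_*(\slashed D\tpsi)(\cdot/k) = \tfrac{1}{2\mu}(k D_0 + kR_k - \tfrac12)\widehat\Psi_*\tpsi(\cdot/k)$. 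Since $\mu = \tfrac{n-1}{2}+k = k(1+O(k^{-1}))$, the prefactor $\tfrac{k}{2\mu}$ equals $\tfrac12 + O(k^{-1})$, the term $\tfrac{1}{2\mu}kR_k$ contributes $O(k^{-1})$ in $C^m$ because we lose one derivative (hence we needed $C^{m+2}$ control), the term $\tfrac{1}{4\mu}$ contributes $O(k^{-1})$, and $\tfrac12 D_0\widehat\Psi_*\tpsi(\cdot/k)$ is within $C\delta + Ck^{-1}\|\phi\|_{C^{m+2}}$ of $\tfrac12 D_0\phi = \tfrac12\phi$ in $C^m$ (applying the first-order operator $D_0$ costs one derivative, so again $C^{m+2}$-closeness of $\tpsi$ feeds $C^{m+1}$, a fortiori $C^m$, closeness here). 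Adding the two halves, $D_0\phi = \phi$ gives $\tfrac12\phi + \tfrac12\phi = \phi$, and collecting all the error terms yields
\[
\bigg\|\phi - \widehat\Psi_*\psi\Big(\tfrac\cdot k\Big)\bigg\|_{C^m(B)} \le C\delta + Ck^{-1}\|\phi\|_{C^{m+2}(B)},
\]
as claimed.

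The main obstacle — or rather the one point that requires genuine care rather than bookkeeping — is the derivative loss in the middle step: applying $\slashed D$ (a first-order operator) to $\tpsi$ costs one derivative, and the error operator $R_k$ arising from the coordinate/trivialization mismatch is itself first-order with $O(k^{-1})$ coefficients, so to end up with a bound in $C^m$ one genuinely needs the $C^{m+2}$ (not merely $C^{m+1}$, because of the two applications of $\slashed D$ inside $\slashed D^2$-type bookkeeping, and to have room for the commutator terms) input that Proposition~\ref{P.spharm1} was carefully stated to provide. I would therefore be explicit about which norm each operation consumes, keeping a margin of two derivatives throughout, and making sure the constants $C$ and $C_m$ are the $k$-independent ones furnished by \eqref{eq.loc} and the smoothness of the normal-coordinate metric on the fixed ball $B$. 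Everything else — the spectral computation identifying $\psi$ as the $+\mu$-eigenspace projection of $\tpsi$, and the expansion $\mu = k + O(1)$ — is elementary.
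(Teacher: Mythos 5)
Your proof is correct, and it takes a genuinely slicker route than the paper's. The paper leaves $\psi$ in the form $\tfrac{1}{2\mu^2}\bigl(\slashed{D}^2\tpsi + \mu\slashed{D}\tpsi\bigr)$ (with $\mu := \tfrac{n-1}{2}+k$) and compares against the second-order identity $2\phi = D_0(D_0+1)\phi$, so the estimate runs through $D_0^2 + D_0$ applied to the difference $\phi - \widehat{\Psi}_*\tpsi(\cdot/k)$ and loses two derivatives. You instead begin by exploiting the exact algebraic identity $(\slashed{D}-\mu)(\slashed{D}\tpsi + \mu\tpsi) = (\slashed{D}^2 - \mu^2)\tpsi = 0$, which simultaneously gives the eigenvalue claim with no computation and simplifies $\psi$ to $\tfrac{1}{2\mu}(\slashed{D}\tpsi + \mu\tpsi)$; the subsequent estimate then only involves $\slashed{D}$ applied once and is matched against the first-order identity $D_0\phi = \phi$. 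This is cleaner, and in fact shows the conclusion holds already assuming only $C^{m+1}$ closeness of $\widehat{\Psi}_*\tpsi(\cdot/k)$ to $\phi$ (one derivative loss, not two), though the $C^{m+2}$ input that Proposition~\ref{P.spharm1} supplies is of course more than enough. Your parenthetical explanation of why $C^{m+2}$ is needed --- ``the two applications of $\slashed{D}$ inside $\slashed{D}^2$-type bookkeeping'' --- actually describes the paper's route, not yours: your simplification is exactly what eliminates the second application, so this is a remark about motivation rather than a gap in the estimates you carry out. One small piece of over-counting: the term $\tfrac12\widehat{\Psi}_*\tpsi(\cdot/k)$ differs from $\tfrac12\phi$ by at most $\tfrac12\delta$ in $C^{m+2}$ by the hypothesis as stated just before the lemma (the $k^{-1}$ contribution has already been absorbed into $\delta$ at that point), so the extra $Ck^{-1}\|\phi\|_{C^{m+2}}$ you attach to that term is spurious; it is harmless because the genuine $O(k^{-1})$ errors from the trivialization mismatch and the replacement $\tfrac{k}{2\mu}\mapsto\tfrac12$ in the $\slashed{D}\tpsi$ term already produce exactly that contribution in the final bound.
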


\begin{proof} In view of equation \ref{wetz3} , one can easily check that $\psi$ satisfies the identity
\[
 \slashed{D} \psi = \Big(\frac{n-1}{2}+k\Big) \psi,
\]
and since $\slashed{D}=D-\frac{1}{2}$, the spinor $\psi$ is an eigenfunction of $D$ with eigenvalue $(\frac{n}{2}+k)$. With regards to the $C^{m}$ bound, first let us note that we have
\[
\widehat{\Psi}_{*}(\slashed{D} \tpsi)=k\Big(D_0 \widehat{\Psi}_{*} \tpsi+\frac{G_1}{k} \partial \widehat{\Psi}_{*} \tpsi+\frac{G_2}{k}\widehat{\Psi}_{*} \tpsi\Big)
\]
\[
\widehat{\Psi}_{*}(\slashed{D}^2 \tpsi)=k\Big(D_0^2 \widehat{\Psi}_{*} \tpsi+\frac{G_3}{k} \partial^{2} \widehat{\Psi}_{*} \tpsi +\frac{G_4}{k} \partial \widehat{\Psi}_{*} \tpsi+\frac{G_5}{k^2}\widehat{\Psi}_{*} \tpsi\Big)
\]
where the $G_i$ are smooth matrix-valued functions satisfying the uniform bounds
\[
||G_i||_{C^m(B)}\leq{C_m}
\]
and $\partial \widehat{\Psi}_{*}\tpsi$ (resp. $\partial^{2}\widehat{\Psi}_{*}\tpsi$)  is a matrix whose entries are first (resp. second) order derivatives of the components of $\widehat{\Psi}_{*}\tpsi$.

Further, since $2\phi=D_0(D_0+1)\phi$, we have

\begin{align}\label{finalestimation}
\bigg\|\phi-\widehat{\Psi}_{*}\psi\Big(\frac{\cdot}{k}\Big)\bigg\|_{C^m(B)}&\leq\bigg\|\frac12(D_0^2+D_0)(\phi-\widehat{\Psi}_{*}\tpsi\Big(\frac{\cdot}{k}\Big) )\bigg\|_{C^m(B)}+\frac{C}{k}\bigg\|\widehat{\Psi}_{*}\tpsi\Big(\frac{\cdot}{k}\Big)\bigg\|_{C^{m+2}(B)}\notag
  \\&\leq C\bigg\|\phi-\widehat{\Psi}_{*}\tpsi\Big(\frac{\cdot}{k}\Big)\bigg\|_{C^{m+2}(B)}+\frac{C}{k}\bigg\|\phi-\widehat{\Psi}_{*}\tpsi\Big(\frac{\cdot}{k}\Big)\bigg\|_{C^{m+2}(B)}\notag\\
&\qquad \qquad \qquad \qquad \quad+\frac{C}{k}\|\phi\|_{C^{m+2}(B)}\,\leq C \delta+ \frac{C}{k}.  
\end{align}
 and the lemma follows. \end{proof}
 
Finally, Theorem \ref{T.approx1} follows upon choosing $\delta$ as small and correspondingly $k$ as large as needed.

\section{Inverse localization for complex spherical harmonics: proof of Proposition~\ref{P.spharm1}}
\label{S.spharm1}

We show in this section that, given any complex-valued function $\phi$ in $\RR^n$ satisfying $\Delta \phi + \phi=0$, there exists a complex-valued spherical harmonic $Y$ in $\SS^n$, satisfying
$\Delta_{\SS^{n}} Y=k(n+k-1) Y$, whose behavior in a small ball of radius $k^{-1}$ is very
close to that of $\phi$ in the euclidean unit ball $B$. 

We will proceed in two successive approximation steps. First, we will approximate the function $\phi(x)$ in $B$ by a function $\vphi(x)$ that consists on a finite sum of terms of the form 
\[
\frac{1}{|x-x_j|^{\frac{n}{2}-1}}J_{\frac{n}{2}-1}(|x-x_j|) 
\]
with $x_j\in\RR^n$, $j=1,...,N$, for $N$ big enough  (Proposition~\ref{P.Bessel1}). In the second step, we show that there is a spherical harmonic $Y$ in $\SS^n$ of
energy $k(n+k-1)$ which, when considered in a ball of radius $k^{-1}$ with coordinates rescaled to the euclidean ball of radius 1, approximates the function $\vphi$, provided that $k$ is large enough. We recall that the dimension of the space of spherical harmonics on the $n$-sphere of eigenvalue $k(n+k-1)$ is given by 
\[
d(k, n):=\binom{n+k-1}{k} \frac{n+2k-1}{n+k-1}.
\]

\begin{proposition}\label{P.Bessel1}
Given any $\de>0$, there is a finite radius $R$ and finitely many complex-valued
constants $\{c_j\}_{j=1}^N$ and points $\{x_j\}_{j=1}^N\subset B_R$ such that the
function
\[
\vphi:=\sum_{j=1}^N c_j\, \frac{1}{|x-x_j|^{\frac{n}{2}-1}} J_{\frac{n}{2}-1}(|x-x_j|)
\]
approximates the function~$\phi$ in the ball $B$ as
\[
\|\phi-\vphi\|_{C^{m+2}(B)}<\de\,.
\]
\end{proposition}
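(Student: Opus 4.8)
The plan is to run a Runge-type approximation argument in two steps. The starting observation is that every building block
\[
u_y(x):=\frac{1}{|x-y|^{\frac n2-1}}\,J_{\frac n2-1}(|x-y|)
\]
is, up to the dimensional constant $(2\pi)^{n/2}$, a Herglotz wave function: by the classical formula for the Fourier transform of the surface measure of $\SS^{n-1}$,
\[
u_y(x)=(2\pi)^{-n/2}\int_{\SS^{n-1}}e^{\I\langle x-y,\om\rangle}\,d\sigma(\om)\,,
\]
which in particular exhibits $u_y$ as an entire solution of $\Delta u_y+u_y=0$ on all of $\RR^n$, since each plane wave $e^{\I\langle x,\om\rangle}$ with $|\om|=1$ is. Accordingly, I will first approximate $\phi$ in $C^{m+2}(B)$ by a single Herglotz wave $\phi_g(x):=\int_{\SS^{n-1}}e^{\I\langle x,\om\rangle}g(\om)\,d\sigma(\om)$ with $g\in C^\infty(\SS^{n-1})$, and then approximate $\phi_g$ by a finite linear combination of the $u_{x_j}$; since $\phi_{e^{-\I\langle y,\cdot\rangle}}=(2\pi)^{n/2}u_y$, the second step reduces to approximating the amplitude $g$ on $\SS^{n-1}$ by finite linear combinations of the plane-wave amplitudes $\om\mapsto e^{-\I\langle x_j,\om\rangle}$.

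For the first step I use the classical expansion of Helmholtz solutions into spherical modes: since $\Delta+1$ is elliptic, $\phi$ is smooth, and
\[
\phi(x)=\sum_{\ell\geq0}\sum_{j}c_{\ell j}\,\cJ_\ell(|x|)\,Y_{\ell j}\!\Big(\tfrac{x}{|x|}\Big)\,,\qquad \cJ_\ell(r):=r^{-(\frac n2-1)}J_{\ell+\frac n2-1}(r)\,,
\]
where $\{Y_{\ell j}\}_j$ is an orthonormal basis of spherical harmonics of degree $\ell$ on $\SS^{n-1}$. Indeed, the coefficient function $r\mapsto\int_{\SS^{n-1}}\phi(r\om)\overline{Y_{\ell j}(\om)}\,d\sigma(\om)$ solves the Bessel-type radial ODE and is bounded at the origin, hence is a constant multiple of $\cJ_\ell$; and the series, together with all its term-by-term derivatives, converges uniformly on $\overline{B_R}$ for every $R$, because smoothness of $\phi$ forces the coefficients $c_{\ell j}$ to decay super-polynomially in $\ell$ while $\cJ_\ell$ and its derivatives are bounded on $[0,R]$ (in fact they decay super-exponentially in $\ell$, by the small-argument asymptotics of Bessel functions). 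Truncating at $\ell\leq L$ with $L$ large gives a partial sum $\phi_L$ with $\|\phi-\phi_L\|_{C^{m+2}(B)}<\de/2$, and Bochner's relation (the Funk--Hecke formula for a plane wave) gives $\cJ_\ell(|x|)Y_{\ell j}(x/|x|)=\ka_{n,\ell}\int_{\SS^{n-1}}e^{\I\langle x,\om\rangle}Y_{\ell j}(\om)\,d\sigma(\om)$ for a nonzero constant $\ka_{n,\ell}$, so $\phi_L=\phi_g$ with $g:=\sum_{\ell\leq L,\,j}\ka_{n,\ell}\,c_{\ell j}\,Y_{\ell j}\in C^\infty(\SS^{n-1})$.

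For the second step, note that $\{\,\om\mapsto e^{-\I\langle y,\om\rangle}:y\in\RR^n\,\}$ restricted to $\SS^{n-1}$ is closed under pointwise multiplication and complex conjugation, contains the constants, and separates points; by the Stone--Weierstrass theorem its linear span is dense in $C(\SS^{n-1})$. Pick $x_1,\dots,x_N\in\RR^n$ and constants $b_1,\dots,b_N$ with $\big\|g-\sum_{j=1}^N b_j\,e^{-\I\langle x_j,\cdot\rangle}\big\|_{C^0(\SS^{n-1})}$ as small as we please. The linear map $g\mapsto\phi_g$ is bounded from $C(\SS^{n-1})$ to $C^{m+2}(B)$: for every multi-index $\al$ and every $x$,
\[
\big|\partial^\al(\phi_{g_1}-\phi_{g_2})(x)\big|\leq\int_{\SS^{n-1}}\big|\partial_x^\al e^{\I\langle x,\om\rangle}\big|\,|g_1-g_2|\,d\sigma(\om)\leq\sigma(\SS^{n-1})\,\|g_1-g_2\|_{C^0(\SS^{n-1})}\,,
\]
using $|\partial_x^\al e^{\I\langle x,\om\rangle}|=|\om^\al|\leq1$ on $\SS^{n-1}$. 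Hence, choosing the approximation of $g$ fine enough, $\big\|\phi_L-(2\pi)^{n/2}\sum_j b_j\,u_{x_j}\big\|_{C^{m+2}(B)}<\de/2$. Setting $c_j:=(2\pi)^{n/2}b_j$ and $R:=1+\max_j|x_j|$, so that all $x_j\in B_R$, and combining the two estimates yields $\|\phi-\vphi\|_{C^{m+2}(B)}<\de$, as claimed.

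The substantive inputs are thus only the spherical-mode expansion of Helmholtz solutions and Bochner's relation, both classical, and I expect the fussiest point to be the careful verification that the truncated expansion converges \emph{together with all its derivatives up to order $m+2$}, uniformly on the closed ball. The expansion can also be bypassed by a Hahn--Banach/Fourier argument: if $\phi$ were not in the $C^{m+2}(\overline B)$-closure of $\myspan\{u_y:y\in\RR^n\}$, a compactly supported distribution $\mu$ supported in $\overline B$ would annihilate every $u_y$ but not $\phi$; vanishing of $\langle\mu,u_y\rangle$ for all $y$ forces the entire function $\widehat\mu$ to vanish on $\{|\xi|=1\}$, hence $\widehat\mu=(1-|\xi|^2)\widehat\nu$ for a compactly supported distribution $\nu$ by a polynomial-division theorem, i.e.\ $\mu=(\Delta+1)\nu$, whence $\langle\mu,\phi\rangle=\langle\nu,(\Delta+1)\phi\rangle=0$, a contradiction.
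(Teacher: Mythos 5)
Your proof is essentially correct and takes a genuinely different, and in parts cleaner, route than the paper's, but one justification is wrong as stated and should be repaired.

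The paper truncates the Fourier--Bessel series with only $L^2(B_2)$ control, then invokes Herglotz's theorem to represent the truncation by an $L^2$ density $f_1$ on $\SS^{n-1}$, smooths $f_1$ to $f_2$, extends $f_2$ to a compactly supported function on $\RR^n$, and discretizes the Fourier inversion integral over $B_R$ to get a finite trigonometric sum. The upgrade from $L^2(B_2)$ to $C^{m+2}(B)$ is done once, at the very end, via interior elliptic estimates. You replace this chain by two slicker observations: Bochner's relation already exhibits each spherical mode $\cJ_\ell Y_{\ell j}$ as a Herglotz wave with the smooth density $Y_{\ell j}$, so the truncation \emph{comes with} a $C^\infty$ amplitude $g$ for free and Herglotz's theorem is never needed; and the linear span of the exponentials $\om\mapsto e^{-\I\langle y,\om\rangle}$ is a conjugation-closed, point-separating, unital subalgebra of $C(\SS^{n-1})$, so Stone--Weierstrass immediately gives $C^0$-density of finite trigonometric sums, replacing the compactly-supported-extension-plus-Riemann-sum argument. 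Combined with the (correct) boundedness of the Herglotz transform from $C(\SS^{n-1})$ to $C^{m+2}(B)$, this yields the result. The closing Hahn--Banach/division sketch is also a legitimate Runge-type alternative, using that $\phi$ solves the Helmholtz equation \emph{globally}.

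The one genuine flaw is the justification of the truncation step. You claim that ``smoothness of $\phi$ forces the coefficients $c_{\ell j}$ to decay super-polynomially in $\ell$.'' This is false: for a plane wave $e^{\I\langle x,\om_0\rangle}$, the Jacobi--Anger expansion has $c_{\ell j}=(2\pi)^{n/2}\I^\ell\,\overline{Y_{\ell j}(\om_0)}$, which does not decay at all (and can grow like $\ell^{(n-2)/2}$ after summing over $j$). What makes the truncation converge in $C^{m+2}(\overline B)$ is not coefficient decay but the super-exponential decay of $\cJ_\ell$ on compact sets; and the cleanest bulletproof way to see $C^{m+2}$ convergence is exactly the paper's route: the partial sums converge in $L^2(B_2)$ to $\phi$ (standard, since $\phi$ is smooth on $\overline{B}_2$), and since both $\phi$ and $\phi_L$ solve $\Delta u+u=0$, interior elliptic estimates give $\|\phi-\phi_L\|_{C^{m+2}(B)}\leq C\|\phi-\phi_L\|_{L^2(B_2)}\to0$. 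You flagged this as the fussiest point yourself; with that fix, the argument is complete.
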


\begin{proof}

First, we notice that, being a solution of the Helmholtz equation, the complex function $\phi$ can be written in the
ball $B_2$ as an expansion
\begin{equation}\label{fbser}
\phi=\sum_{l=0}^\infty\sum_{j=1}^{d(n-1, l)} b_{lj}\,j_{l}(r)\, Y_{lj}(\om),
\end{equation}
where $r:=|x|\in\RR^+$ and $\om:=x/r\in\SS^{n-1}$, $Y_{lj}$ are a basis of spherical harmonics of eigenvalue $l(l+n-2)$, $j_l$ are $n$-dimensional hyperspherical Bessel functions and $b_{lj}\in\Cone$ are complex coefficients. 

The series ~\ref{fbser} is convergent in the $L^2$ sense, which means that, for any $\de'>0$, we can truncate the sum at some integer $L$
\begin{equation}\label{Lrvb}
\phi_1:=\sum_{l=0}^{L}\sum_{j=1}^{d(n-1, l)} b_{lj}\, j_{l}(r)\, Y_{lj}(\om)
\end{equation}
so that it approximates $\phi$ as
\begin{equation}\label{L2b}
\|\phi_1-\phi\|_{L^2(B_2)}<\de'\,.
\end{equation}

The function $\phi_1$ decays as $|\phi_1(x)|\leq C/|x|^{\frac{n-1}{2}}$ for large enough $|x|$ (because of the decay properties of the spherical Bessel functions). Hence, Herglotz's
theorem (see e.g.~\cite[Theorem 7.1.27]{LAX}) ensures that we can write
\begin{equation}
\phi_1(x)=\int_{\SS^{n-1}}f_1(\xi)\, e^{ix\cdot\xi}\, d\si(\xi)\,,
\end{equation}
where $d\si$ is the area measure on $\SS^{n-1}:=\{\xi\in\RR^n:|\xi|=1\}$ and $f_1$ is a complex-valued function in $L^2(\SS^{n-1})$.

We now choose a smooth function $f_2$ approximating $f_1$ as
\[
\|f_1-f_2\|_{L^2(\SS^{n-1})}<\de'\,,
\]
which is always possible since smooth functions are dense in $L^2(\SS^{n-1})$. The function defined as the inverse Fourier transform of $f_2$, 
\begin{equation}
\phi_2(x):=\int_{\SS^{n-1}}f_2(\xi)\,e^{ix\cdot\xi}\, d\si(\xi)\,,
\end{equation}
approximates $\phi_1$ uniformly: by the
Cauchy--Schwarz inequality, we get
\begin{align}
|\phi_2(x)-\phi_1(x)|=\bigg|\int_{\SS^{n-1}}(f_2(\xi)-f_1(\xi))\,e^{ix\cdot\xi}\,
  d\si(\xi)\bigg|\leq C\|f_2-f_1\|_{L^2(\SS^{n-1})}<C\de'\, \label{eqv2v1}
\end{align}
for any $x\in\RR^n$.

We now want to approximate the function $f_2$ by a trigonometric polynomial: for any given $\de'$, we are going to find a radius $R>0$ and
finitely many points
$\{x_j\}_{j=1}^N\subset B_R$ and constants $\{c_j\}_{n=1}^N\subset\Cone$ such that the smooth function in $\RR^n$  
$$
f(\xi):=\frac{1}{(2\pi)^{\frac{n}{2}}}\sum_{j=1}^N c_j \, e^{-ix_j\xi},
$$
when restricted to the unit sphere, approximates $f_2$ in the $C^0$ norm,
\begin{equation}\label{eqqma}
\|f-f_2\|_{C^0(\SS^{n-1})}<\de'\,.
\end{equation}

In order to do so, we begin by extending $f_2$ to a smooth function $g:\RR^n\to \Cone$ with compact support,
$$
g(\xi):=\chi(|\xi|)\, f_2\bigg(\frac{\xi}{|\xi|}\bigg)\,,
$$
where $\chi(s)$ is a smooth bump function, being $1$ when, for example,
$|s-1|<\frac14$, and vanishing for $|s-1|>\frac12$. The Fourier
transform $\hg$ of $g$ is Schwartz, so it is easy to see that, outside some ball $B_R$, the $L^1$~norm of $\hg$ is very small,
\[
\int_{\RR^n\backslash B_R}|\hg(x)|\, dx<\de'\,,
\]
and therefore we get a very good approximation of $g$ by just considering its Fourier representation with frequencies within the ball $B_R$, that is,
\begin{equation}\label{Rlarge}
\sup_{\xi\in\RR^n}\bigg|g(\xi)-\int_{B_R}\hg(x)\, e^{-ix\cdot\xi}\, dx\bigg|<\de'\,.
\end{equation}

Next, we can approximate the integral
$$
\int_{B_R}\hg(x)\,e^{-ix\cdot\xi}\, dx
$$
by the sum
\begin{equation}\label{fcn}
f(\xi):=\frac{1}{(2\pi)^{\frac{n}{2}}}\sum_{j=1}^N c_j \, e^{-ix_j\cdot \xi}
\end{equation}
with constants $c_j\in\Cone$ and points $x_j\in B_R$, so that we have the bound
\begin{equation}\label{eqdisc}
\sup_{\xi\in \SS^{n-1}}\bigg|\int_{B_R}\hg(x)\, e^{-ix\cdot\xi}\, dx-f(\xi)\bigg|<\de'\,.
\end{equation}

To see this, consider a covering of the ball $B_R$ by closed sets
$\{U_j\}_{j=1}^N$, with piecewise smooth boundaries and pairwise
disjoint interiors, and
whose diameters do not exceed $\de''$. Since the
function~$e^{-ix\cdot\xi}\, \hg(x)$ is smooth, we have that for each $x,y\in U_j$
\[
\sup_{\xi\in\SS^{n-1}}\big|\hg(x)\, e^{-ix\cdot\xi}-\hg(y)\, e^{-i y\cdot\xi}|< C\de''\,,
\]
with the constant $C$ depending on~$\hg$ (and therefore on~$\de'$) but
not on $\de''$. If $x_j$ is any point
in~$U_j$ and we set $c_j:=(2\pi)^{\frac{n}{2}}\hg(x_j)\,|U_j|$ in~\eqref{fcn}, we get
\begin{align*}
\sup_{\xi\in\SS^{n-1}}\bigg|\int_{B_R}\hg(x)\, e^{-ix\cdot\xi}\,
  dx-f(\xi)\bigg|&\leq \sum_{j=1}^N\int_{U_j} \sup_{\xi\in\SS^{n-1}}\big|\hg(x)\,\e^{-i
                   x\cdot\xi}-\hg(x_j)\, e^{-ix_j\cdot\xi}\big|\, dx\\
&\leq C\de''\,,
\end{align*}
with $C$ depending again on $\de'$ and $R$ but not on $\de''$ or $N$. By taking ~$\de''$ so that $C\de''<\de'$, the estimate~\eqref{eqdisc} follows.

Now, in view of ~\eqref{Rlarge} and~\eqref{eqdisc}, one has
\begin{equation*}
\|f-g\|_{C^0(\SS^{n-1})}<C\de'\,,
\end{equation*}
where $C$ does not depend on~$\de'$. The estimate~\eqref{eqqma} follows upon noticing that the function~$f_2$ is the restriction
to~$\SS^{n-1}$ of the function~$g$. 

To conclude, set
\begin{equation*}
\vphi(x):=\int_{\SS^{n-1}}f(\xi)\, e^{ix\cdot\xi}\, d\si(\xi)=\sum_{j=1}^N
\frac{1}{(2\pi)^{\frac{n}{2}}} c_{j}\int_{\SS^{n-1}}e^{i(x-x_{j})\cdot \xi}\,d\si(\xi)=\sum_{j=1}^N
c_{j}\, \frac{1}{|x-x_j|^{\frac{n}{2}-1}}J_{\frac{n}{2}-1}(|x-x_j|)\,,
\end{equation*}
from Equation \eqref{eqqma} we infer that 
\begin{equation*}
\|\vphi-\phi_2\|_{C^0(\RR^n)}\leq \int_{\SS^{n-1}}|f(\xi)-f_2(\xi)|\, d\si(\xi)<C\de'\,,
\end{equation*}
and from Equations \eqref{L2b} and \eqref{eqv2v1} we get the
$L^2$ estimate
\begin{align}\label{cas}
\|\phi-\vphi\|_{L^2(B_2)}\leq C\|\vphi-\phi_2\|_{C^0(\RR^n)}+C\|\phi_2-\phi_1\|_{C^0(\RR^n)}+\|\phi_1-\phi\|_{L^2(B_2)}<C\de'\,.
\end{align}
Furthermore, both $\vphi$ and $\phi$ satisfy the Helmholtz equation in $\RR^n$ (note that the Fourier transform of~$\vphi$ is supported on~$\SS^{n-1}$), so by standard elliptic estimates we have
\begin{equation*}
 \|\phi-\vphi\|_{C^{m+2}(B)}\leq C\|\phi-\vphi\|_{L^2(B_2)}< C\delta'\,,
\end{equation*}
and taking $\delta'$ small enough so that $C\delta'<\de$, the proposition follows. \end{proof}

The next step consists on showing that, for any large enough integer
$k$, we can find a complex-valued spherical harmonic $Y$ on $\SS^n$ with
eigenvalue $k(n+k-1)$ that approximates, in the ball $\BB_{1/k}$ and when rescaled, the function $\vphi$ in the unit ball. The proof hinges on
asymptotic expansions of ultraspherical polynomials, and uses the representation of~$\vphi$ as sum of shifted
Bessel functions which we just obtained as a key ingredient.

\begin{proposition}\label{P.spharm2}
Given constant $\de>0$, for any large enough positive integer $k$ there is a complex spherical harmonic $Y$ on
$\SS^n$ with energy~$k(n+k-1)$ satisfying
\begin{equation*}
\bigg\|\vphi-Y\circ \Psi^{-1}\Big(\frac\cdot k\Big)\bigg\|_{C^{m+2}(B)}<\de\,.
\end{equation*}
\end{proposition}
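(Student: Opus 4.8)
\emph{Strategy.} The plan is to build $Y$ as an explicit finite linear combination of \emph{zonal} spherical harmonics of degree $k$, one for each shifted Bessel function occurring in $\vphi$, exploiting the classical fact that a zonal harmonic of large degree $k$, observed at scale $k^{-1}$, is asymptotically a constant multiple of precisely such a Bessel function; thus the representation of $\vphi$ furnished by Proposition~\ref{P.Bessel1} is tailor-made for this construction. Recall that for $q\in\SS^n\subset\RR^{n+1}$ the function $x\mapsto C_k^{(\frac{n-1}{2})}(\langle x,q\rangle)$ --- with $C_k^{(\frac{n-1}{2})}$ the Gegenbauer polynomial of degree $k$ and $\langle x,q\rangle=\cos\dist(x,q)$ --- is a spherical harmonic on $\SS^n$ of eigenvalue $k(n+k-1)$, the zonal harmonic centred at $q$. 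Since $\frac{n-1}{2}-\frac12=\frac n2-1$, the Mehler--Heine asymptotics for Gegenbauer polynomials (which follow from the Jacobi case via $C_k^{(\lambda)}=\frac{(2\lambda)_k}{(\lambda+\frac12)_k}P_k^{(\lambda-\frac12,\lambda-\frac12)}$; cf.\ \cite{EPT} in dimension $3$) read
\[
C_k^{(\frac{n-1}{2})}\!\Big(\cos\tfrac zk\Big)=\kappa_n\,k^{\,n-2}\,\Big(\frac z2\Big)^{-(\frac n2-1)}\!J_{\frac n2-1}(z)\,\big(1+o(1)\big)\qquad(k\to\infty),
\]
uniformly for $z$ in compact subsets of $[0,\infty)$, with a constant $\kappa_n\neq0$ depending only on $n$. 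Fix $\gamma_k:=\bigl(2^{\frac n2-1}\kappa_n\,k^{\,n-2}\bigr)^{-1}$, so that $\gamma_k\,C_k^{(\frac{n-1}{2})}(\cos\frac zk)\to |z|^{-(\frac n2-1)}J_{\frac n2-1}(|z|)$.

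\emph{Placing the centres and assembling $Y$.} Take $k$ large enough that $R<k$ and $k>2$, and for $j=1,\dots,N$ put $q_j:=\Psi^{-1}(x_j/k)\in\BB_{R/k}$. For $x\in B_2$ the point $\Psi^{-1}(x/k)$ lies in $\BB_{2/k}$, well inside the injectivity radius, and since normal geodesic coordinates centred at $p_0$ render the metric $\delta_{ij}+O(|y|^2)$, a Taylor expansion of the distance function on $\SS^n$ gives
\[
k\,\dist\!\big(\Psi^{-1}(x/k),\,q_j\big)=|x-x_j|+O(k^{-2}),
\]
with implied constant uniform for $x\in B_2$ and $j=1,\dots,N$. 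Now set
\[
Y:=\sum_{j=1}^N c_j\,\gamma_k\,C_k^{(\frac{n-1}{2})}\!\big(\langle\,\cdot\,,q_j\rangle\big),
\]
a spherical harmonic of eigenvalue $k(n+k-1)$. Writing $\beta_{k,j}:=\gamma_k\,C_k^{(\frac{n-1}{2})}\!\big(\langle\Psi^{-1}(\cdot/k),q_j\rangle\big)$ and $b_j(x):=|x-x_j|^{-(\frac n2-1)}J_{\frac n2-1}(|x-x_j|)$, we get $Y\circ\Psi^{-1}(\cdot/k)=\sum_j c_j\beta_{k,j}$ and $\vphi=\sum_j c_j b_j$; the two displays above show $\beta_{k,j}\to b_j$ uniformly on $B_2$ as $k\to\infty$.

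\emph{Upgrading to $C^{m+2}$ and conclusion.} To promote uniform convergence to $C^{m+2}(B)$ convergence, note that in the rescaled coordinate $y:=k\,\Psi(\cdot)\in B_2$ the eigenvalue equation $\Delta_{\SS^n}Y=k(n+k-1)Y$, after dividing by $k^2$, becomes $L_k\beta_{k,j}=0$, where $L_k$ is uniformly elliptic on $B_2$ and its coefficients converge in $C^\infty(B_2)$ to those of the Helmholtz operator $\Delta+1$ --- here one uses that the $O(|y|^2)$ metric perturbation in normal coordinates is multiplied by $k^{-2}$ after rescaling. Standard interior elliptic estimates then bound $\{\beta_{k,j}\}_k$ in $C^{m+3}(B_{3/2})$ by its $L^2(B_2)$ norm, uniformly in $k$; applied to the differences $\beta_{k,j}-\beta_{k',j}$ they show $\{\beta_{k,j}\}_k$ is Cauchy in $C^{m+2}(B)$, and its $C^0$ limit being $b_j$ forces $\beta_{k,j}\to b_j$ in $C^{m+2}(B)$. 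Since the sum over $j$ is finite, $Y\circ\Psi^{-1}(\cdot/k)\to\vphi$ in $C^{m+2}(B)$, so choosing $k$ large enough gives $\|\vphi-Y\circ\Psi^{-1}(\cdot/k)\|_{C^{m+2}(B)}<\de$.

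\emph{Main obstacle.} The delicate point is this last upgrade: one must verify that the rescaled eigenvalue equation on $\SS^n$ converges to the euclidean Helmholtz equation with $k$-independent ellipticity constants and coefficient bounds, so that elliptic regularity applies uniformly in $k$. By contrast, the mismatch between the geodesic distance to $q_j$ and the euclidean distance to $x_j$, and the metric distortion in normal coordinates, are genuine but harmless errors of size $O(k^{-2})$ once the $C^{m+2}$ estimates are in place.
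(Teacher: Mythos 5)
Your proposal is correct and follows essentially the same route as the paper: a finite linear combination of zonal harmonics $C^{(\frac{n-1}{2})}_k(\langle\cdot,q_j\rangle)$ centered at $q_j=\Psi^{-1}(x_j/k)$, the Mehler--Heine/Darboux asymptotics to recover the shifted Bessel functions at scale $k^{-1}$, and interior elliptic estimates for the rescaled equation to upgrade the $C^0$ bound to $C^{m+2}$. The only (harmless) execution difference is in that last step: you argue via a uniform $C^{m+3}$ bound plus compactness/Cauchy-in-$k$, whereas the paper writes the equation for the difference $\vphi-\widetilde Y$ and absorbs the $O(k^{-1})$ right-hand side into the left -- both are standard and equivalent.
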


Proposition~\ref{P.spharm1} follows from this proposition, provided $k$ is large enough and $\de$ is
chosen small enough for $2\de$ not to be larger than~$\de'$. 

\begin{proof}
Consider the ultraspherical polynomial of dimension
$n+1$ and degree~$k$, $C^{n}_{k}(t)$, which is defined as
\begin{equation}\label{cla}
C^{n}_k(t):=\frac{\Gamma(k+1) \Gamma(\frac{n}{2})}{\Gamma(k+\frac{n}{2})}\,P_{k}^{(\frac{n}{2}-1,\,\frac{n}{2}-1)}(t)\,,
\end{equation}
where $\Gamma(t)$ is the gamma function and $P_{k}^{(\alpha,\,\beta)}(t)$ are the Jacobi polynomials (see e.g~\cite[Chapter IV, Section 4.7]{Szego75}). We have included a normalizing factor so that $C^{n}_k(1)=1$ for all $k$ .

Let $p,q$ be two points in $\SS^{n}$, considered as the
set $\{|p|=1\}$ of $\RR^{n+1}$. The addition theorem for ultraspherical
polynomials ensures that $C^{n}_k(p\cdot q)$ (where $p\cdot q$
denotes the scalar product in~$\RR^{n+1}$ of the vectors $p$ and $q$) can be written as
\begin{equation}\label{clasp}
C^{n}_{k}(p\cdot q)=\frac{2\pi^{\frac{n+1}{2}}}{\Gamma(\frac{n+1}{2})} \frac{1}{d(k, n)}\sum_{j=1}^{d(k, n)}Y_{k j}(p)\,Y_{k j}(q)\,,
\end{equation}
with $\{Y_{k j}\}_{j=1}^{d(k, n)}$ being an arbitrary orthonormal
basis of spherical harmonics with eigenvalue $k(k+n-1)$. 

We recall that the function $\vphi$ was expressed as the finite sum 
\[
\vphi(x)=\sum_{j=1}^N c_j\,\frac{1}{|x-x_j|^{\frac{n}{2}-1}}J_{\frac{n}{2}-1}(|x-x_j|)\,,
\]
with coefficients $c_j\in \Cone$ and points $x_j \in B_R$. With these $c_j$ and $x_j$ we define, for any $p\in\SS^n$, the function
\begin{equation*}
Y(p):=\sum_{j=1}^N c_j \frac{1}{2^{\frac{n}{2}-1}\Gamma(\frac{n}{2})} \, C^{n}_k(p\cdot p_j)\,,
\end{equation*}
where $p_j:=\Psi^{-1}(\frac{x_{j}}{k})$. As long as $k>R$, $p_j$ is well defined. In view of Equation \eqref{clasp} it is clear that $Y$ is a spherical harmonic with eigenvalue $k(n+k-1)$.

Our aim is to study the asymptotic properties of the spherical harmonic
$Y$. To begin with, note that if we consider points $p:=\Psi^{-1}(\frac{x}{k})$ with $k>R$ and $x\in B_R$, we have
\begin{equation}\label{cos}
p\cdot p_j=\cos\big(\text{dist}_{\SS^n}(p,p_j)\big)=\cos \bigg(\frac{|x-x_j|+O(k^{-1})}{k}\bigg)\,,
\end{equation}
as $k\to \infty$. The last equality comes from $\Psi:\BB\to B$ being normal geodesic coordinates (by $\text{dist}_{\SS^n}(p,p_j)$ we mean the
distance between $p$ and $p_j$ considered on the sphere
$\SS^n$).  From now on we set
\begin{equation}\label{defi}
\tY(x):=Y\circ\Psi^{-1}\bigg(\frac{x}{k}\bigg)\,.
\end{equation}

When $k$ is large, one has
$$
\frac{\Gamma(k+1)}{\Gamma(k+\frac{n}{2})}=k^{1-\frac{n}{2}}+O(k^{-\frac{n}{2}})\,,
$$
so from Equation \eqref{cos} we infer
\begin{equation}\label{aaa}
C^{n}_k(p\cdot
p_j)=\bigg(\Gamma \Big(\frac{n}{2}\Big)k^{1-\frac{n}{2}}+O(k^{-\frac{n}{2}})\bigg)\,
P_{k}^{(\frac{n}{2}-1,\,\frac{n}{2}-1)}\bigg(
\cos\bigg(\frac{|x-x_j|+O(k^{-1})}{k}\bigg)\bigg)\,.
\end{equation}
By virtue of Darboux's formula for the Jacobi polynomials~\cite[Theorem 8.1.1]{Szego75}, we have the estimate
\begin{equation*}
\frac{1}{k^{\frac{n}{2}-1}}P_{k}^{(\frac{n}{2}-1,\,\frac{n}{2}-1)}\Big(\cos\frac{t}{k}\Big)=
2^{\frac{n}{2}-1}\, \frac{J_{\frac{n}{2}-1}(t)}{t^{\frac{n}{2}-1}}+O(k^{-1})\,,
\end{equation*}
uniformly in compact sets (e.g., for $|t|\leq
2R$). Hence, in view of Equation \eqref{aaa}, $\tY$ can be written as
\begin{align*}
\tY(x)&=\sum_{j=1}^N c_j \frac{1}{2^{\frac{n}{2}-1}\Gamma(\frac{n}{2})} C^{n}_k\Big(\cos\Big(\frac{|x-x_j|+O(k^{-1})}{k}\Big)\Big)\\&=\sum_{j=1}^{N}c_j \,\frac{1}{|x-x_j|^{\frac{n}{2}-1}}J_{\frac{n}{2}-1}(|x-x_j|)+O(k^{-1})\,,
\end{align*}
for $k$ big enough and $x, x_j\in B_R$. From this we get the
uniform bound
\begin{equation}\label{c0}
\|\vphi-\tY\|_{C^0(B)}<\de'\,
\end{equation}
for any $\delta'>0$ and all $k$ large enough.

It remains to promote this bound to the $C^{m+2}$ estimate. For this, note that, as the spherical harmonic $Y$  has eigenvalue $k(n+k-1)$, the rescaled function~$\tY$ verifies on $B$ the equation
\begin{equation*}
\Delta \tY+\tY=\frac{1}{k}A\tY\,,
\end{equation*}
with
$$
A\tY:=-(n-1)\tY+G_1\, \partial \tY+G_2\, \partial^2\tY\,,
$$
and where $\partial \tY$ (resp. $\partial^{2}\tY$) is a matrix whose entries are first (resp. second) order derivatives of $\tY$, and $G_i(x, k)$ are smooth matrix-valued functions with uniformly bounded derivatives, i.e.,
\begin{equation}\label{bg2a}
\sup_{x\in B}|\partial^\alpha_x G_i(x, k)|\leq C_{\al}\,.
\end{equation} 
with constants $C_{\al}$ independent of $k$.

Since $\vphi$ satisfies the Helmholtz equation $\De \vphi+\vphi=0\,$, the difference $\vphi-\tY$ satisfies
$$
\De(\vphi-\tY)+(\vphi-\tY)=\frac{1}{k}A\tY\,,
$$
and, considering the estimates~\eqref{c0} and~\eqref{bg2a}, by standard elliptic estimates we get
\begin{align*}
\|\vphi-\tY\|_{C^{m+2,\alpha}(B)}&<C\|\vphi-\tY\|_{C^0(B)}+\frac{C}{k}\|A\tY\|_{C^{m,\alpha}(B)}\\
&<C\delta'+\frac{C}{k}\|\vphi-\tY\|_{C^{m+2,\al}(B)}+\frac{C}{k}\|\vphi\|_{C^{m+2,\alpha}(B)}\,,
\end{align*}
so we conclude that, for $k$ big enough and $\de'$
small enough,
$$
\|\vphi-\tY\|_{C^{m+2}(B)}\leq C\de'+\frac{C \|\vphi\|_{C^{m+2,\al}}}{k}<\de
$$
and the proposition follows. \end{proof}

\section{Flexibility in euclidean space: proof of Theorem~\ref{T.euclidean}}
\label{S.euclidean}
Let us recall the setting: inside the unit $n$-dimensional ball $\BB \subset \RR^{n}$, we are given a collection $\mathfrak{S}:=\{ \Sigma_a \}_{a=1}^{r(n)}$ of $r(n)$ closed, pairwise disjoint, codimension 2 submanifolds. Our aim in this section is to find functions $\phi_a: \RR^{n}\rightarrow \Cone$, with $a=1,...,r(n)$, and a diffeomorphism $\Phi: \BB \rightarrow \BB$ , as close to the identity in the $C^{m}$ norm as we want, such that, on the one hand, each $\Phi(\Sigma_a)$ is a structurally stable nodal set of the corresponding $\phi_a$, and on the other hand, the spinor $\phi:=(\phi_1,...,\phi_{r(n)})$ satisfies the standard Dirac equation in $\RR^{n}$
\[
D_0 \phi= \sum_{\mu=1}^n \rho (e_\mu) \partial_\mu \phi=\lambda \phi 
\]
where $\{ e_\mu \}_{\mu=1}^{n}$ is an orthonormal basis on $\RR^{n}$ and $\rho(e_\mu)$ denotes the standard Clifford multiplication. We will henceforth fix $\lambda=1$, the general case being completely analogous.

To find and $\phi$ and $\Phi$, we will adapt the strategy introduced in \cite{EP} for analogous realization problems in second order elliptic PDEs. To begin with, one finds local solutions to the PDE under consideration that vanish at the $\Sigma_a$: this is achieved by means of some local existence theorem (we will use Cauchy-Kovalevskaya theorem). Then, one promotes these local solutions to global ones by means of a Runge-type approximation theorem (we will use Lax-Malgrange theorem). Through the process, one must ensure that the zero sets are structuraly stable, and this is done by prescribing the normal derivatives of the local solutions at the nodal sets and applying Thom's isotopy lemma.

Still, the application of the above scheme to our problem requires some additional considerations, the reason being, first of all, that the Dirac operator is of order 1, and thus we loose the capacity to directly prescribe the normal derivatives on the Cauchy problem; and further, that the Dirac operator mixes all components $\phi_a$ of $\phi$, so that we cannot solve the problem for each component separately. 

Let us begin with our construction. First of all, we note that, to use the Cauchy-Kovalevskaya theorem to find local solutions, our submanifolds $\Sigma_a$ should be real analytic.  This can be achieved by an arbitrarily small perturbation of the original submanifolds (because of the density of analytic functions in the space of smooth functions). Indeed, since the normal bundle of each $\Sigma_a$ is of rank 2, it is always trivial by a theorem of W. S. Massey \cite{MA}, and therefore we can always find a smooth complex-valued function $F_a$ vanishing at $\Sigma_a$ and whose differential has full rank at $\Sigma_a$. 
The full rank condition makes the zero set $\Sigma_a$ of $F_a$ structurally stable, so a real analytic function approximating $F_a$ well enough is guaranteed to have as zero set a very small perturbation of $\Sigma_a$ (in the sense of being diffeomorphic by an ambient diffeomorphism arbitrarily close to the identity).

We will keep denoting by $\Sigma_a$ the analytic submanifolds obtained after small perturbation, and by $F_a$ the collection of complex-valued, real analytic functions realizing such submanifolds as zero sets.

For each $a$, let $M_a$ be an analytic hypersurface containing $\Sigma_a$, and such that, for $a\neq b$, $M_a$ and $M_b$ are disjoint. For example, we can choose $M_a$ to be
\[
M_a:=\{ x \in \RR^{n}, \text{dist}(x, \Sigma_{a})\leq \epsilon_a, \text{Im}(F_a)(x)=0\}
\]
for $\epsilon_a$ small enough.


We will denote by $n_a$ the normal vector field to $M_a \subset \RR^{n}$, and by $\nu_a$ the normal vector field to $\Sigma_a \subset M_a$, when considering $\Sigma_a$ as a hypersurface of $M_a$. 

The next lemma will play an important role in what is to follow, providing the initial condition for the Cauchy problem that is compatible with structural stability:

\begin{lemma} \label{stability} There is a collection of real-analytic functions $g_b: \Sigma_a\rightarrow \Cone$, with $b \in \{1,...,r(n)\}\setminus \{a\}$, such that the spinor $\vphi:=(g_1, g_2,...,\nu_a \cdot \nabla F_a,...,g_{r(n)})$ verifies the condition
\[
\text{Im } [\rho(n_a)\rho(\nu_a)\vphi(x)]_a > 0
\]
for all $x \in \Sigma_a$. Here, by $[\cdot]_a$ we denote the $a$-th component of a spinor. 
\end{lemma}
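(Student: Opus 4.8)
The plan is to recast the required inequality as an affine condition on the unknown functions $g_b$, exhibit an explicit solution when a certain ``bad set'' on $\Sigma_a$ is empty, and then arrange by a small perturbation that this bad set is indeed empty. Set $J:=\rho(n_a)\rho(\nu_a)$. Since $n_a$ and $\nu_a$ are orthonormal, the Clifford relations give $J^2=-\rho(n_a)^2\rho(\nu_a)^2=-I$, so $J$ is a pointwise invertible endomorphism of $\CC^{r(n)}$ depending real-analytically on $x\in\Sigma_a$. Moreover $\nu_a$ is tangent to $M_a=\{\Imag F_a=0\}$, so $\nu_a\cdot\nabla\Imag F_a=0$, whence $c:=\nu_a\cdot\nabla F_a=\nu_a\cdot\nabla\Real F_a$ is a real-analytic \emph{real-valued} function on $\Sigma_a$; orienting $\nu_a$ in the direction of increasing $\Real F_a$ (possible because $dF_a$ has full rank along $\Sigma_a$) we may take $c>0$. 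Writing $J_{ab}$ for the entries of $J$ in the standard basis of $\CC^{r(n)}$, the content of the lemma is to find real-analytic $g_b$ with
\[
\Imag\Big(J_{aa}(x)\,c(x)+\sum_{b\neq a}J_{ab}(x)\,g_b(x)\Big)>0\qquad\text{for every }x\in\Sigma_a .
\]

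Let $B_a:=\{x\in\Sigma_a:\ J_{ab}(x)=0\ \text{for all}\ b\neq a\}$; note $r(n)\geq 2$ for $n\geq 3$, so there is at least one index $b\neq a$ available. Suppose first that $B_a=\varnothing$. Then $\omega(x):=\sum_{b\neq a}|J_{ab}(x)|^2$ is a positive continuous function on the compact set $\Sigma_a$, hence $\omega\geq\omega_0>0$ there; and since each $J_{ab}$ is a real-analytic map $\Sigma_a\to\CC$, so is $\overline{J_{ab}}$. Put $g_b(x):=i\,t_0\,\overline{J_{ab}(x)}$ for a constant $t_0>0$. Then $\sum_{b\neq a}J_{ab}g_b=i\,t_0\,\omega$, so
\[
\Imag\Big(J_{aa}c+\sum_{b\neq a}J_{ab}g_b\Big)=\Imag(J_{aa}c)+t_0\,\omega\geq -\sup_{\Sigma_a}|J_{aa}c|+t_0\,\omega_0 ,
\]
which is positive once $t_0$ is chosen large enough; the $g_b$ so defined are real-analytic and the $a$-th component of $\vphi$ is left untouched. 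Thus under the assumption $B_a=\varnothing$ the lemma is immediate.

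It remains to arrange $B_a=\varnothing$, and this is where the freedom in the construction is used: $\Sigma_a$ has so far only been fixed up to an arbitrarily small perturbation (the one rendering it real-analytic), and $M_a$ — hence the orthonormal normal frame $(n_a,\nu_a)$ along $\Sigma_a$ — is itself a free choice (any trivialization of the rank-$2$, hence trivial, normal bundle of $\Sigma_a$ may be used). Note first that $B_a$ cannot be repaired on itself: if $x\in B_a$, then in $(J^2)_{aa}=-1$ only the $b=a$ term of $\sum_b J_{ab}J_{ba}$ survives, so $J_{aa}(x)^2=-1$, hence $J_{aa}(x)=\pm i$ and the left-hand side above equals the \emph{fixed} real number $\pm c(x)$, which no choice of the $g_b$ can correct. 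To make $B_a$ empty, observe that in dimension $3$ a short computation with Pauli matrices gives $\rho(n_a)\rho(\nu_a)=-i\,(n_a\times\nu_a)\cdot\bsi$ with $\bsi=(\sigma_1,\sigma_2,\sigma_3)$, and $n_a\times\nu_a$ is precisely, up to sign, the unit tangent $T$ to the curve $\Sigma_a$, since it is orthogonal to the normal plane $\Span\{n_a,\nu_a\}$; reading off the off-diagonal entries one finds $B_a=\{x\in\Sigma_a:\ T(x)=\pm e_3\}$, which depends on $\Sigma_a$ alone and is empty for a generic real-analytic curve, so a further tiny perturbation of $\Sigma_a$ achieves $B_a=\varnothing$. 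In general, the condition ``$J_{ab}=0$ for all $b\neq a$'' defines a proper real-analytic subset of the space of admissible orthonormal normal frames; since $\dim\Sigma_a=n-2$ and the normal framing may be prescribed almost arbitrarily, a generic choice of the defining function $F_a$ and the auxiliary hypersurface $M_a$ keeps the induced frame map off that subset.

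The main obstacle is exactly this last point: controlling $B_a$ and showing it can be made empty by a permitted perturbation; everything else is elementary. (If one instead allowed $B_a\neq\varnothing$, one would additionally need $J_{aa}=+i$ there, which constrains the orientation of $n_a$.) A secondary matter deserving care is the transversality/codimension count for general $n$, and the precise sense in which the normal framing of $\Sigma_a$ is at our disposal; for $n=3$, which already contains the full difficulty, the explicit computation above disposes of it directly.
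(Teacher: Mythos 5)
Your approach is genuinely different from the paper's. The paper argues pointwise (at each $x_0\in\Sigma_a$, using that the full $a$-th row of $\Gamma$ cannot vanish, one is supposed to be able to pick complex numbers $w_{b0}$ satisfying the inequality), then globalizes the local choices via an open cover, a partition of unity, and finally replaces the resulting smooth $g_b$ by nearby real-analytic ones using compactness and the openness of the condition. You instead produce a closed-form global formula $g_b=it_0\,\overline{J_{ab}}$, which bypasses the local-to-global patching entirely, under the extra hypothesis that the bad set $B_a$ is empty, and then try to dispose of $B_a$ by a perturbation. Your observation that the pointwise step breaks down on $B_a$ is sharp: there $J_{aa}=\pm i$, the $g_b$'s contribute nothing, and the inequality becomes the rigid condition $\pm c(x)>0$, so no choice of data can help. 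The paper's phrase ``it is easy to see that one can find a collection of complex numbers $w_{b0}$'' silently skips exactly this case, and your explicit $n=3$ computation ($B_a=\{T=\pm e_3\}$) shows the case is not vacuous.

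The gap in your proposal is precisely the one you flag: the claim that $B_a$ can always be emptied by an admissible perturbation of $\Sigma_a$, $F_a$, or the normal frame is not actually established for $n>3$. The codimension count $2\big(r(n)-1\big)>n-2$ is favorable for every $n\geq 3$, but it only proves the result if the $r(n)-1$ complex conditions $J_{ab}=0$ ($b\neq a$) cut out a real-analytic subset of the expected codimension in $V_2(\RR^n)$; this is a concrete statement about the spin representation (there could be identities relating the off-diagonal entries of $\rho(n)\rho(\nu)$), not something one may assert by invoking abstract genericity. Your claim that $n=3$ ``contains the full difficulty'' is unsupported: there the representation is two-dimensional and the off-diagonal condition is a single complex scalar, whereas in general the structure of the off-diagonal block of $\rho(n)\rho(\nu)$ is considerably richer. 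Finally, the alternative you mention in passing (flipping $n_a$ to fix the sign of $J_{aa}$) only changes the sign globally on each component of $\Sigma_a$ and so cannot repair a disconnected $B_a$ carrying both signs; perturbation is the right mechanism, but the transversality argument remains to be done.
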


\begin{proof} Let $\Gamma(x)=\{\Gamma_{cd}(x)\}$, with $c,\,d=1,...,r(n)$, denote the matrix $\rho(n_a)\rho(\nu_a)(x)$. Note that at each point $x \in \Sigma_a$, $\Gamma(x)$ is an antihermitian matrix, so $\Gamma_{aa}(x)$ is purely imaginary or zero. The above condition can be written as

\begin{equation}\label{positive}
-i \Gamma_{aa}(x) \nu_a \cdot \nabla \text{Re} (F_a)+\text{Im} \sum_{b=1}^{r(n)} \Gamma_{ab}(x) g_b(x)>0.
\end{equation}

The matrix $\Gamma(x)$ always has non-zero determinant, so the functions $\Gamma_{ab}(x)$, for $b=1,...,r(n)$, cannot all vanish at once. Besides, note that the term $\nu_a \cdot \nabla \text{Re} F_a(x)$ cannot vanish either, because rank$(dF_a |_{\Sigma_a})$ is 2 while $\nabla \text{Im} F_a(x)|_{M_a}$ vanishes. Thus, at any point $x_0\in \Sigma_a$, it is easy to see that one can find a collection of complex numbers $w_{b0}$ such that condition \ref{positive} holds. Further, since all the coefficients in condition \ref{positive} are analytic functions, and the condition is open, on a sufficiently small neighborhood of the point $x_0$ we can promote the complex numbers $w_{b0}$ to complex valued real analytic functions $g_{b0}(x)$ satisfying \ref{positive}, with $g_{b0}(x_0)=w_{b0}$. 

Thus, the above discussion grants the existence of an open cover $\{U_\alpha\}$ of $\Sigma_a$ and an associated collection of complex analytic functions $g_{b \alpha}: U_\alpha \rightarrow \Cone$ satisfying \ref{positive} on each $U_\alpha$. Therefore, if $\{ \psi_\alpha, U_\alpha\}$ is a partition of unity, it is easy to check that the smooth functions
\[
g_b(x):=\sum_{\alpha} \psi_\alpha(x) g_{b \alpha} (x)
\]
satisfy the condition \ref{positive} for all $x \in \Sigma_a$. These functions fail to be analytic, because they are defined through a partition of unity, however, since the condition is open and $\Sigma_a$ is compact, we can find analytic functions close enough to the $g_b$ so that \ref{positive} is still satisfied.   
\end{proof}

Now, let $N_a\subset M_a$ be a tubular neighbourhood of $\Sigma_a$ (to define the tubular neighborhood, $\Sigma_a$ is being considered as hypersurface of $M_a$), and let $X$ be the projection of the vector field $\nabla \text {Re} (F_a)$ to the tangent space of $M_a$. Provided $N_a$ is small enough, the vector field $X$ is never zero on $N_a$. Thus, any point $y \in N_a$ can be written uniquely as $y=\Phi^{t}_X(x)$, for some $x\in \Sigma_a$ and $t\in\RR$. The functions $f_b$ on $N_a$ defined as
\[
f_b(y)=f_b(\Phi^{t}_X (x))= t g_b(x), 
\]
are real-analytic, vanish on $\Sigma_a$, and verify that $\nu_a \cdot \nabla f_b =g_b$. With these functions as input, we pose the following Cauchy problem for the Dirac operator on a neighborhood of the hypersurfaces $N_a$:
\begin{equation}\label{initial value 2}
\begin{cases}
    
D_0 \phi =\phi \\
 \phi_b |_{N_a}=f_b \text{ for $b \in \{1,...,r(n)\}\setminus \{a\}$}\\
 \phi_a |_{N_a}=F_a

\end{cases}
\end{equation}

The Cauchy-Kovalevskaya theorem yields a solution $\phi$ to \eqref{initial value 2} on a small enough tubular neighborhood $U_a$ of $N_a$ (it is understood that we have one such solution $\phi^{a}$ for each submanifold $\Sigma_a$, but we will not make this explicit for ease of notation). 

Let us analyze the properties of this local solution. First of all, $\Sigma_a$ is a nodal set of the component $\phi_a$ of $\phi$, because we have imposed that $\phi_a |_{\Sigma_a}=F_a|_{\Sigma_a}=0$. Further, by virtue of the properties of the functions $f_b$, we are going to see that this nodal set is structurally stable (note that, in fact, $\Sigma_a$ is also a nodal set of the rest of the components $\phi_b$, but it need not be structurally stable for them).

To get structural stability, we need to ensure that the real and imaginary parts of $\nabla \phi_a$ are linearly independent on $\Sigma_a$. Firstly, note that we can write
$$
\nabla \phi_a \rvert_{\Sigma_a} = (n_a \cdot \nabla \phi_a) n_a + \nabla_{N_a} \phi_a |_{\Sigma_a} = (n_a \cdot \nabla \phi_a)  n_a + X |_{\Sigma_a}
$$
because the projection of $\nabla \text{Im}(F_a)|_{\Sigma_a}$ into the tangent space of $N_a$  is zero. In addition, $X$ is real and does not vanish. Hence, for $\nabla \text{Re} (\phi_a)$ and $\nabla \text{Im} (\phi_a)$ to be linearly independent, it is necessary and sufficient that the term $\text{Im }(n_a \cdot \nabla \phi_a)$ does not vanish on $\Sigma_a$. 

Next, note that the Dirac equation on $\Sigma_a$ can be written as
\[
n_a\cdot \nabla \phi|_{\Sigma_a}=\rho(n_a)\rho(\nabla_M \phi)|_{\Sigma_a},
\]
because $\phi |_{\Sigma_a}=0$ and $\rho(n_a)\rho(n_a)=-\text{Id}$. It is at this point that the properties of the functions $f_b$ are important: since $f_b |_{\Sigma_a}=0$ and $\nu_a \cdot\nabla f_b=g_b$, we have that
\[
\rho(n_a)\rho(\nabla_M \phi)|_{\Sigma_a}=\rho(n_a)\rho(\nu_a)\vphi
\]
with $\vphi=(g_1,...,\nabla \text{Re}(F_a),...,g_{r(n)})$. Hence, by Lemma \ref{stability}, the term
 $$
 \text{Im }[\rho(n_a)\rho(\nu_a)\vphi ]_a=\text{Im } n_a\cdot \nabla \phi|_{\Sigma_a}
 $$
 is greater than zero, and structural stability follows.

To sum up, what we get after carrying out the same construction for each submanifold $\Sigma_a$ is a collection of local solutions $\{\phi^{a}=(\phi^{a}_1,..., \phi^{a}_{r(n)})\}_{a=1}^{r(n)}$ to the Dirac equation, each one being defined on a small enough open neighborhood $U_a$ of the corresponding submanifold $\Sigma_a$ (small enough so that the open sets can be assumed to be pairwise disjoint) and verifying that $\Sigma_a=(\phi^{a}_{a})^{-1}(0)$, with the structural stability property. Since the complement of the union of all the open sets $U_a$ has no relatively compact components, the Lax-Malgrange approximation theorem \cite{NA} ensures the existence, for any given constant $\delta>0$ and integer $m\geq1$, of a global solution $\phi'$ to the Dirac equation on $\RR^{n}$ satisfying

\[
\|\phi^{a}-\phi'\|_{C^{m}(U_a)}\leq \delta.
\] 

In particular, the components verify $\|\phi^{a}_a-\phi'_a\|_{C^{m}(U_a)}\leq \delta$. Since $\text{rank}(d\phi'_a) |_{\Sigma_a}$ is 2, by Thom's isotopy lemma we can choose an appropriate $\delta$ so that there is a diffeomorphism $\Phi: \BB \rightarrow \BB$ verifying $\|\Phi-\text{Id}\|_{C^{m}(U_a)}\leq \epsilon$, for $\epsilon$ as small as desired, and such that $\Phi(\Sigma_a) \subset \phi'^{-1}_{a}(0)$.

\section{Prescribing nodal sets at different regions at once}
\label{S.lens}
To set the stage, consider a collection of structures $\{ \mathfrak{S}^{\alpha} \}_{\alpha=1}^\La$, where $\La$ is a positive integer that we can choose as large as we want, and where each $\mathfrak{S}^{\alpha}$ is itself a collection of $r(n)$ codimension 2 submanifolds, $\mathfrak{S}^{\alpha}=\{\Sigma_1^{\alpha},...,\Sigma_{r(n)}^{\alpha}\}$. 

Following the notation in Section \ref{S.main}, for a set of points $\{p_\alpha \}_{\alpha=1}^\La$ in $\SS^{n}$ we denote by $\Psi_{\alpha} : \BB_{\rho}(p_\alpha) \rightarrow B_\rho$ the corresponding geodesic patches on balls centered on the $p_{\alpha}$ of radius $\rho$. We fix a radius $\rho$ such that no two balls intersect, for example by setting 
\[
\rho:=\frac12\min_{\alpha \neq \beta}\dist_{\SS^n}(p_\alpha, p_\beta)\,.
\]

Our goal is to find a Dirac spinor realizing each structure $\mathfrak{S}^{\alpha}$ within the ball $\BB_{\rho}(p_\alpha)$. 

\begin{remark} Note that it is necessary to choose the set $\{p_\alpha\}_{\alpha=1}^\La$ so that no pair of points are antipodal in $\SS^{n}\subset \RR^{n+1}$, i.e, $p_\alpha\neq-p_\beta$ for all $\alpha$, $\beta$. The reason is that if a structure $\mathfrak{S}$ is realized by the Dirac spinor $\psi$ of eigenvalue $\frac{n}{2}+k$ in the ball $\BB_{k^{-1}}(p)$, $\mathfrak{S}$ is also automatically realized by $\psi$ in the antipodal ball $\BB_{k^{-1}}((-1)^{k}p)$. Indeed, the components $\psi_a$ have parity $(-1)^{k}$, that is
\[
\psi_a (-p)=(-1)^{k} \psi_a (p),
\]
because, being complex spherical harmonics of energy $k(n+k-1)$, they are the restriction to the sphere of a couple of real harmonic homogenous polynomials of degree $k$.
\end{remark}

The realization of many different structures at once is a direct consequence of the proposition that is to follow. It exploits the fast decay of ultraspherical polynomials of high degree outside the balls where they behave as shifted Bessel functions:

\begin{proposition}\label{P.spharm.corr}
Let $\{\phi_\alpha\}_{\alpha=1}^\La$ be a set of $\La$ complex-valued functions in $\RR^n$, satisfying
$\Delta \phi_\alpha+\phi_\alpha=0$. Fix a positive integer $m$ and positive constant $\de$.  For any large enough integer $k$, there is a spherical harmonic $Y$ on
$\SS^n$ with energy $k(n+k-1)$ verifying the bound
\begin{equation*}
\bigg\|\phi_\alpha-Y\circ \Psi_{\alpha}^{-1}\Big(\frac\cdot k\Big)\bigg\|_{C^{m+2}(B_{\rho})}<\de\,
\end{equation*}
for all $1 \leq \alpha \leq \La$. 
\end{proposition}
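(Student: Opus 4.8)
The plan is to reduce to the single-region statement (Proposition~\ref{P.spharm2}) by a superposition argument, exploiting that a degree-$k$ ultraspherical polynomial $C^{n}_k(p\cdot q)$ stays bounded by $1$ while decaying in $k$ as soon as $p\cdot q$ is confined to a compact subset of $(-1,1)$, so that the terms attached to one region are uniformly negligible on the other regions. First I would apply Proposition~\ref{P.Bessel1} separately to each $\phi_\alpha$ (its proof works verbatim on any fixed ball, in particular $B_\rho$), producing a radius $R$ and, for each $\alpha$, points $\{x_j^\alpha\}_{j=1}^{N_\alpha}\subset B_R$ and constants $\{c_j^\alpha\}$ such that the shifted-Bessel sum
\[
\vphi_\alpha(x):=\sum_{j=1}^{N_\alpha}c_j^\alpha\,\frac{1}{|x-x_j^\alpha|^{\frac n2-1}}\,J_{\frac n2-1}(|x-x_j^\alpha|)
\]
approximates $\phi_\alpha$ to within $\de/2$ in $C^{m+2}(B_\rho)$ for each $\alpha$. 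Setting $p_j^\alpha:=\Psi_\alpha^{-1}(x_j^\alpha/k)$, well defined once $k>R/\rho$, and
\[
Y^\gamma(p):=\sum_{j=1}^{N_\gamma}\frac{c_j^\gamma}{2^{\frac n2-1}\Gamma(\frac n2)}\,C^{n}_k(p\cdot p_j^\gamma)\,,\qquad Y:=\sum_{\gamma=1}^{\La}Y^\gamma\,,
\]
the candidate $Y$ is, by the addition theorem~\eqref{clasp}, a spherical harmonic of energy $k(n+k-1)$; here each $Y^\gamma$ is exactly the spherical harmonic produced in Proposition~\ref{P.spharm2} out of $\vphi_\gamma$ and the patch $\Psi_\gamma$.

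Next, for each fixed $\alpha$ I would estimate $\tY:=Y\circ\Psi_\alpha^{-1}(\cdot/k)$ on $B_\rho$, treating the ``diagonal'' term $\gamma=\alpha$ and the ``off-diagonal'' terms $\gamma=\beta\ne\alpha$ separately. For the diagonal term, the computation in the proof of Proposition~\ref{P.spharm2} (based on $p\cdot p_j^\alpha=\cos((|x-x_j^\alpha|+O(k^{-1}))/k)$ and Darboux's formula, uniformly for $|x-x_j^\alpha|\le R+\rho$) gives $\|\vphi_\alpha-Y^\alpha\circ\Psi_\alpha^{-1}(\cdot/k)\|_{C^0(B_\rho)}=o(1)$ as $k\to\infty$. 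For an off-diagonal term, if $x\in B_\rho$ and $p:=\Psi_\alpha^{-1}(x/k)$ then $\dist_{\SS^n}(p,p_\alpha)\le\rho/k$ and $\dist_{\SS^n}(p_j^\beta,p_\beta)\le R/k$, so $p\cdot p_j^\beta=\cos\big(\dist_{\SS^n}(p_\alpha,p_\beta)+O(k^{-1})\big)$ lies, for $k$ large, in a fixed compact subset of $(-1,1)$, uniformly in $x$, $j$, $\alpha$ and $\beta$ — this is where it is essential, as the remark points out, that the centres $p_\gamma$ be pairwise distinct and pairwise non-antipodal, since otherwise the argument would tend to $\pm1$, at which $C^{n}_k$ equals its maximal value and does not decay. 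On such a compact set, the standard asymptotics of Jacobi polynomials away from the endpoints~\cite[Theorem~8.21.8]{Szego75}, together with $\Gamma(k+1)/\Gamma(k+\tfrac n2)=O(k^{1-\frac n2})$, yield $|C^{n}_k(p\cdot p_j^\beta)|\le C\,k^{\frac{1-n}{2}}=o(1)$; summing the finitely many off-diagonal terms and adding the diagonal bound gives $\|\vphi_\alpha-\tY\|_{C^0(B_\rho)}=o(1)$.

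Finally I would upgrade this to the required $C^{m+2}$ estimate by the elliptic bootstrap used at the end of the proof of Proposition~\ref{P.spharm2}: $Y$ having energy $k(n+k-1)$, the rescaled field satisfies $\Delta\tY+\tY=\tfrac1k A\tY$ with $A$ of second order and $k$-uniformly bounded coefficients, whence $\vphi_\alpha-\tY$ solves an inhomogeneous Helmholtz equation with source bounded by $\tfrac Ck\|\tY\|_{C^{m+2}}$; Schauder estimates together with absorption of the $\tfrac Ck\|\vphi_\alpha-\tY\|_{C^{m+2}}$ term then promote the $C^0$ bound to $\|\vphi_\alpha-\tY\|_{C^{m+2}(B_\rho)}=o(1)$, hence $<\de/2$ for $k$ large, and by the triangle inequality $\|\phi_\alpha-\tY\|_{C^{m+2}(B_\rho)}<\de$. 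Since $\La$ is finite a single threshold on $k$ works for all $\alpha$ at once, which is the assertion. The step I expect to be the real obstacle — the only genuinely new ingredient beyond Proposition~\ref{P.spharm2} — is this off-diagonal control: each $C^{n}_k(p\cdot p_j^\beta)$ must simultaneously reproduce a shifted Bessel function near its own centre and decay in $k$ throughout every competing ball, and making both happen forces the use of uniform Jacobi asymptotics away from $t=\pm1$ and the non-antipodality of the configuration.
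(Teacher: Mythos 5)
Your proof is correct and essentially identical to the paper's: you reduce to the single-ball case by superposing the spherical harmonics $Y^\gamma$ produced there, and you control the cross-terms by the decay of $C^n_k(p\cdot q)$ when $p\cdot q$ stays in a compact subset of $(-1,1)$, which is exactly where the non-antipodality of the centres is used; the only cosmetic difference is that you unwind Proposition~\ref{P.spharm1} through \ref{P.Bessel1} and \ref{P.spharm2} explicitly and cite Szeg\H{o}'s Theorem 8.21.8 where the paper invokes Theorem 7.32.2 (both give the needed off-diagonal decay $|C^n_k|=O(k^{-1})$ away from $\pm1$).
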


\begin{proof} Applying Proposition \ref{P.spharm1} to each $\phi_\alpha$ we obtain, for high enough $k$, complex spherical harmonics $\{ Y_\alpha \}_{\alpha=1}^{\La}$ satisfying the bound
\begin{equation*}
\bigg\|\phi_\alpha-Y_\alpha\circ \Psi_{\alpha}^{-1}\Big(\frac\cdot k\Big)\bigg\|_{C^{m+2}(B)}<\de\,.
\end{equation*}
The spherical harmonics $Y_\alpha(p)$ are linear combinations of ultraspherical polynomials  $C^{n}_{k}(p\cdot q)$, with $\dist_{\SS^n}(p_\alpha, q)$ proportional to $k^{-1}$. Recall that ultraspherical polynomials verified the asymptotic formula
\[
C^{n}_k(p\cdot
q)=\frac{\Gamma(\frac{n}{2})}{k^{\frac{n}{2}-1}}\,
P_{k}^{(\frac{n}{2}-1,\,\frac{n}{2}-1)}(
\cos(\dist_{\SS^n}(p, q)))+O(k^{-\frac{n}{2}})\,,
\]
and considering the fact that the Jacobi polynomials behave, uniformly for $k^{-1}<t<\pi-k^{-1}$, as (see ~\cite[Theorem 7.32.2]{Szego75})
\[
k^{\frac{n}{2}-1}\, P_k^{(\frac{n}{2}-1,\frac{n}{2}-1)}(\cos t)=\frac{O(k^{-1})}t,
\]
we can conclude that the $C^{n}_k(p \cdot q)$ are uniformly
bounded as
\[
|C^{n}_k(p\cdot q)|\leq \frac{C_\rho}{k}
\]
for any points $p$ and $q$ verifying
\begin{equation*}
\dist_{\SS^n}(p,q)\geq \rho \quad \text{and}\quad \dist_{\SS^n}(p,-q)\geq \rho \,,
\end{equation*}
and where $C_{\rho}$ is a constant depending only on $\rho$.
The same decay is thus also exhibited by the spherical harmonics $Y_\alpha$, 
\[
\|Y_{\alpha}\|_{C^0(\SS^n\backslash (\BB(p_\alpha,\rho)\cup \BB(-p_\alpha,\rho))}\leq \frac{C'_\rho}{k}
\]
since they are just normalized linear combinations of ultraspherical polynomials (here $C'_{\rho}$ depends also on the particular coefficients in the expansion of $Y_\alpha$, that is, on $\phi_\alpha$ and $\delta$). 

Now, if we define the spherical harmonic
\[
Y:=\sum_{\alpha=1}^\La Y_{\alpha}\,
\]
and we choose $k$ large enough, and $\rho$ small enough so that the sets $\BB(p_\alpha,\rho)\cup \BB(-p_\alpha,\rho)$ are disjoint for all $\alpha$, the desired bound automatically follows in the $C^{0}$ norm. By standard elliptic estimates, we promote it to the $C^{m+2}$ norm, and the proposition follows. \end{proof}

Given this proposition, arguing exactly as in Sections 2 and 3 we can find a spinor $\psi$ and a diffeomorphism $\Phi$ such that $\Phi(\Sigma^{\alpha}_a)\subset\psi^{-1}_a(0) \cap \BB(p_\alpha, \rho)$, for $\alpha=1,...,\La$. 


\section{Concluding remarks: the torus case}\label{S:final}

We conclude by sketching how to prove an analog of Theorem \ref{T.main1} on the torus $\TT^{n}$. 

The spin structures on the $n$-dimensional torus $\TT^{n}=\RR^{n}/\ZZ^{n}$ are indexed by elements in the first cohomology class $H^{1}(\TT^{n}, \ZZ_{2})\cong\ZZ^{n}_{2}$. The Dirac operator corresponding to the zero cohomology class can be regarded as the one inherited from the standard Dirac operator on $\RR^{n}$.  

For this Dirac operator, an inverse localization theorem (as Theorem \ref{T.approx1}) can be proven following a similar strategy as the one in Section 3. To begin with, Weitzenb\H{o}ck formulas (and hence the relation between spinors and eigenfunctions of the Laplace-Beltrami operator) are trivial. Next, for the proof of the torus analog of Proposition \ref{P.spharm1}, it suffices to argue as in Section \ref{S.spharm1}, with the following adaptation: the role of the shifted Bessel functions centered at points $x_j$ is now played by trigonometric polynomials of frequencies $\xi_j$, with $|\xi_j|=1$; and the role of the ultraspherical polynomials is now played by trigonometric polynomials of frequencies $k\xi_j$ (with $k$ an eigenvalue of the Dirac operator in the torus), with the further requirement that $k$ and $\xi_j$ verify $k\xi_j \in \ZZ^{n}$ (so that a trigonometric polynomial of frequencies $k\xi_j$ defines an eigenfunction of the laplacian on the torus). These results in hand, the reasoning in Section \ref{S.main} can be directly transplanted to the torus case.

Interestingly enough, a number-theoretical subtlety arises when carrying out the above scheme. In the torus analog of Proposition \ref{P.Bessel1}, one would like to approximate the complex-valued function $\phi_1$ (in the notation of Section \ref{S.spharm1})
\[
\phi_1(x)=\int_{\SS^{n-1}}f_1(\xi)\, e^{ix\cdot\xi}\, d\si(\xi)\,,
\] 
by a sum of the form
\[
\vphi(x)=\sum_{j=1}^{N} c_j e^{i\xi_j \cdot x}
\]
with $|\xi_j|=1$ and $k\xi_j \in \ZZ^{n}$ for high enough $k$. It is obvious that one can always approximate the above integral as much as one wishes by a discretized sum centered at some points $\xi_j \in \SS^{n}$; the problem is that we have a very specific requirement on the points $\xi_j$. The approximation would hold whenever, for a sequence of eigenvalues  $k\rightarrow \infty$ going to infinity, there are corresponding sets of points $\{ \xi_n \}^{N}_{n=1}$ with $k\xi_n \in \ZZ^{n}$ that become dense in $\SS^{n-1}$.

However, not all sequences of eigenvalues might verify this property. As a matter of fact, if we restrict our attention to sequences of integer eigenvalues, and we substitute the density property by the (stronger) property of equidistribution of the corresponding sequences of sets of rational points, the problem is related to the celebrated Linnik problem in number theory (when $k$ is an integer, we say that the frequencies $\xi_j$ with $\xi_j k\in \ZZ^{n}$ are rational points on $\SS^{n-1}\subset \RR^{n}$ of \emph{height} $k$). For instance, in dimension $3$, only certain sequences of integers are known to verify the equidistribution property \cite{Du88, Du03}, for example, sequences of odd integers, or sequences of integers whose squares are square-free. To take this phenomenon into account, one has to modify accordingly the statement of Theorem \ref{T.main1}, so that it reflects that it is the eigenfunctions with, say, sufficiently high \emph{odd} eigenvalue (in the case of $\TT^{3}$) that realize the given structure $\mathfrak{S}$.

\section*{Acknowledgments}

The author is grateful to Daniel Peralta-Salas for his valuable suggestions, especially concerning section \ref{S.euclidean}, and to \'Angel David Mart\'inez for his critical reading of the manuscript. The author is supported by the ERC Starting Grant~335079 and by a fellowship from Residencia de Estudiantes. He also thanks the Department of Mathematics of Harvard University, where part of this work was carried out, for its hospitality. This work is supported in part by the ICMAT--Severo Ochoa grant SEV-2015-0554.

\bibliographystyle{amsplain}

\begin{thebibliography}{99}\frenchspacing


\bibitem{AR}
R. Abraham and J. Robbin, \emph{Transversal mappings and flows}, Benjamin, New York, 1967.

%

\bibitem{BAR1}
C. B\H{a}r, The Dirac operator on space forms of positive curvature, J. Math. Soc. Japan 48  (1996) 1 69--83.

\bibitem{BAR12}

C. B\H{a}r, The Dirac operator on hyperbolic manifolds of finite volume, J. Differential Geom. 54 (2000) 3 439--488. 


\bibitem{BAR2}
C. B\H{a}r, Zero sets of solutions to semilinear elliptic systems of first order, Invent. Math. 138  (1999) 1 183--202. 

\bibitem{FRI}
H. Dlubek, Th. Friedrich, Spektraleigenschaften des Dirac-Operators--die Fundamentall\H{o}sung seiner W\H{a}rmeleitungsgleichung und die Asymptotenentwicklung der Zeta-Funktion, J. Differential Geom. 15 (1980) 1 1--26.

\bibitem{DF}
H. Donnelly and C. Fefferman, Nodal sets for eigenfunctions of the Laplacian on surfaces, J. Amer. Math. Soc. 3 (2) (1990) 333--353.

\bibitem{Du88}
W. Duke, Hyperbolic distribution problems and half-integral weight Maass forms, Invent. Math. 92 (1988) 73--90.

\bibitem{Du03}
W. Duke, Rational points on the sphere, Ramanujan J. 7 (2003) 235--239.

\bibitem{EP}
A. Enciso, D. Peralta-Salas, Submanifolds that are level sets of solutions to a second-order elliptic PDE, Adv. Math. 249 (2013) 204--249.

\bibitem{EPT}
A. Enciso, D. Peralta-Salas, F. Torres de Lizaur, Knotted structures in high-energy Beltrami fields on the torus and the sphere, Ann. Sci. \'Ec. Norm. Sup. 50 (2017) 4, 995--1016.

\bibitem{GRO}
M. Gromov, \emph{Partial differential relations}. Ergebnisse der Mathematik und ihrer Grenzgebiete (3) 9. Springer-Verlag, Berlin,1986. 

\bibitem{GRO-LAW}
M. Gromov, H. Blaine Lawson, Positive scalar curvature and the Dirac operator on complete Riemannian manifolds, Inst. Hautes ƒtudes Sci. Publ. Math. 58 (1983) 83--196. 

\bibitem{LAX}
L. H\"ormander, \emph{The analysis of linear partial differential
  operators I}, Springer, Berlin, 2003. 

\bibitem{LO1}
A. Logunov, Nodal sets of Laplace eigenfunctions: polynomial upper estimates of the Hausdorff measure, Ann. of Math. 187 (2018) 221--239.

\bibitem{LO2}
A. Logunov, Nodal sets of Laplace eigenfunctions: proof of Nadirashvili's conjecture 
and of the lower bound in Yau's conjecture, Ann. of Math. 187 (2018) 241--262

\bibitem{LOMA}
A. Logunov, E. Malinnikova, Nodal sets of Laplace eigenfunctions: estimates of the Hausdorff measure in dimension two and three, arxiv:1605.02595

\bibitem{MA}
W. S. Massey, On the normal bundle of a sphere imbedded in Euclidean space, Proc. Amer. Math. Soc. 10 (1959) 959--964.

\bibitem{NA}
R. Narasimhan, \emph{Analysis on real and complex manifolds}, North-Holland Publishing Co., Amsterdam, 1985.


\bibitem{OKA}
K. Oka, Sur les fonctions analytiques de plusieurs variables III: Deuxi\`eme probl\`eme de Cousin, Journ. Sci. Hiroshima Univ. 9 (1939) 7--19 (translated by R. Narasimhan in: \emph{K. Oka, Collected papers}, Springer-Verlag, Berlin, 1984).
%

\bibitem{Szego75}
G. Szeg{\H{o}}, {\em Orthogonal polynomials}, AMS, Providence, 1975.

\bibitem{VAFAWITT}
C. Vafa, E. Witten, Eigenvalue inequalities for fermions in gauge theories, Comm. Math. Phys. 95 (1984) 3 257--276. 


 
\bibitem{WITT}

E. Witten, A new proof of the positive energy theorem, Comm. Math. Phys. 80 (1981) 3 381--402. 
 
\bibitem{Yau}
 
S.T. Yau, Survey on partial differential equations in differential geometry. Seminar on Differential Geometry, Ann. of Math. Stud., 102, pp 3--71, Princeton Univ. Press, Princeton, N.J., 1982.

\end{thebibliography}

\end{document}